\tikzstyle{block}=[draw opacity=0.7,line width=1.4cm]
\newtheorem{theorem}{Theorem}
\newtheorem{lemma}{Lemma}
\newtheorem{definition}{Definition}
\newtheorem{proposition}{Proposition}
\newtheorem{remark}{Remark}
\newtheorem{corollary}{Corollary}
\DeclareFontFamily{OT1}{pzc}{}
\DeclareFontShape{OT1}{pzc}{m}{it}{<-> s * [1.2200] pzcmi7t}{}
\DeclareMathAlphabet{\mathpzc}{OT1}{pzc}{m}{it}
\title{Fast rotating flows in high spatial dimensions}
\author{Zhu, Jian-Zhou @ SCCFIS.org}
\begin{document}
\maketitle
\abstract{The central result about fast rotating-flow structures is the Taylor-Proudman theorem (TPT) which connects various aspects of the dynamics. Taylor's geometrical proof of TPT is reproduced and extended substantially, with Lie's theory for general frozen-in laws and the consequent generalized invariant circulation theorems, to compressible flows and to $d$-dimensional Euclidean space ($\mathbb{E}^{d}$) with $d\ge 3$. The TPT relatives, the reduced models (with particular interests on passive-scalar problems), the inertial (resonant) waves and the higher-order corrections, are discussed coherently for a comprehensive bird view of rotating flows in high spatial dimensions.}

\section{Introduction}

The vortex dynamics in models of classical and super flows, and, superconductors have long been studied and, recently, singular structures in spatial dimension $d\ge 4$ attract interests\cite{LinCPAM1998,JianLiuActaMathematicaSinica2006,MiyazakiPhD2010,ShashikanthJMP12,KhesinMMJ12}. Key roles in such vortex structures are played by the (fast) rotations, and central to the various aspects of the dynamics of the rotating structures is the celebrated Taylor-Proudman theorem (TPT).

Taylor's\cite{Taylor} analytical proof of TPT is based on invariant circulation theorem, thus geometrical and topological. Such a legacy was inherited, to our best knowledge, only by Chandrasekhar\cite{Chandrasekhar61Book} and only half-way in his own style, in the rotating frame [with a Coriolis term whose influence on the (regularity of the) solutions has recently been of intensive studies in mathematical literatures\cite{BabinMahalovNicolaenkoAsymptotAnal1997,KoniecznyYonedaJDE2011,SunYangCuiAMPA2016,
ZhaoWangMMAS,KishimotoYonedaMathAnn2017,KozonoMashikoTakadaJEE2014}]. Chandrasekhar did not use the Alfv\'en theorem to derive the analogue of TPT in incompressible magnetohydrodynamics: Taylor's geometrical idea then appears to be basically ignored again (but see, e.g., Roberts \cite{Roberts1978LNP} for some occasional notes). Indeed, Taylor's proof seems a detour, compared to the popular text-book proof checking directly the dominant term in the vorticity equation (essentially the simple idea of dominant balance). It is a mystery to us why Taylor did it that way and how he found it, and, interestingly, the closely related \textit{Taylor column} had already been observed (c.f., Velasco Fuentes\cite{VelascoFuentesEJMB2009} for more historical discussions and references) by the same scholar whose name (then William Thompson) is associated to the circulation invariance, the `Kelvin circulation theorem'\footnote{The earlier discovery of this theorem by Hankel has been addressed by Truesdell\cite{Truesdell1954Book} and recently by Frisch \& Villone\cite{FrischVilloneEPJH2014}.}, that he applied but did not cite as now people commonly would do. We will not explore this problem of history of science in this note but will show that Taylor's insight is fundamental and universal, when extended to compressible models and to higher dimensions using Lie's theory\cite{Arnold1989Book}, and, useful' for theoretical reasoning and physical applications.

Let's start from Taylor's own proof:--- 

The Stokes lemma\cite{Arnold1989Book}, transforming the surface integral to the integration around the circuit bounding it, connects the Helmholtz theorem of the Euler equation for an inviscid flow to Kelvin's circulation theorem. If the pressure term does not contribute (as in the barotropic case), for any material circuit $c(t)$ with horizontal/$x$-$y$ plane projection area $\mathcal{A}(t) = \oint_{c(t)} xdy - ydx$, we have the following invariant circulation for the flow of velocity $\bm{u}$ of a fluid with background rotation at a rate $\bm{\Omega}$ in the vertical direction ($z$ axis), relative to which is the velocity $\bm{u}' = \bm{u} - \bm{\Omega} \times \bm{r}$:
\begin{equation}
 \oint_{c(t)}\bm{u} \cdot d\bm{r} = \oint_{c(t)} (u_x' - y\Omega) dx + (u_y' + x\Omega) dy + u_z' dz = \oint_{c(t)}\bm{u}' \cdot d\bm{r} + 2\Omega \mathcal{A}. \label{eq:circulation}
\end{equation}
If $\oint_{c}\bm{u}' \cdot d\bm{r}$ for the relative motion velocity $\bm{u}'$ varies slowly (compared to $\Omega$), $\mathcal{A}$ changes little, which is the geometrical argument of Taylor \cite{Taylor} for the two-dimensionalization of incompressible rotating flows:
\begin{equation}
\text{For $\nabla \cdot \bm{u} =0$ and $\Omega \to \infty$, $d\mathcal{A}/dt\to 0 \Rightarrow \partial_z\bm{u} \to \bm{0}$}.
\end{equation}

We see that adding the gradient of a potential $\nabla \theta$ to $\bm{u}'$ formally does not change the result. In other words, we can consider the compressible flows, in which case TPT may be even more `powerful' in the sense that some `rotation effect' may reveal. Thus, the asymptotic (in the sense of `small' and `slow' limit for the vortical part of the relative motion) TPT of compressible rotating flows reads
\begin{eqnarray}
\text{$\partial_z \bm{u}_h = (\partial_z u_x, \partial_z u_y, 0) \to \bm{0}$ and the horizontal incompressibility $\nabla_h \cdot \bm{u}_h :=\partial_x u_x + \partial_y u_y \to 0$,} \label{eq:cTPE}\\
\text{but with no constraint on $\partial_z u_z$.} \label{eq:cTPEuz}
\end{eqnarray}

Taylor's proof reveal the time-independent version of Kelvin's circulation theorem, which actually is part of the flow geometry with, for instance, unsteady and invariant \textit{integral surfaces}\cite{Lee2000Book,Montgomery2002Book,FeckoJGP17} generalizing the vortex lines. 
The language of differential forms appears to expose the geometry and even the topology of flows more straightforwardly, particularly in higher dimensions and curved spaces, and if indeed Taylor's geometrical argument is essential, it deserves to check similar consequences of a background fields relevant to various Helmholtz-Kelvin-type theorems in different systems\cite{FeckoJGP17,ArnoldKhesin98Book,BesseFrischJFM17}. Note that a system may have a hierarchy of geometrical objects, besides the vorticity, say, for the classical Euler equation, satisfying the Helmholtz-Kelvin-type theorems, which makes it worth checking whether there would be accordingly a hierarchy of corrections or even inconsistency.

One of our motivations came from the passive-scalar problem: As shown in the above, the classical TPT for fast rotating incompressible fluids results in a  `cylinder condition' with everything independent of one of the vertical/rotating axis, which presents in the reduced dynamical model\cite{BabinMahalovNikolaenkoEJMB1996,EmbidMajdaCPDE1996}. It is direct to see that in higher-dimensional space\footnote{\label{lb:dDflow}High-dimensional flows have been of long theoretical interests, and have recently received intensive attendions on the issues of regularity\cite{DongDuCMP2006,DongGuDPDE2014,GuJDE2014,LiuWangJDE2017}, conservation laws \cite{KhesinChekanovPhD1989,ArnoldKhesin98Book}, specific dynamics in four space-time\cite{SubinVedanZAMP2004} or spatial\cite{ShashikanthJMP12} dimensions, and turbulence\cite{KraichnanPoF1985,MeneveauNelkinPRA1989,GotohPRE2007,NikitinJFM2011}.}, the Euler or Navier-Stokes equation with the cylinder condition also contains (multiple) passive scalar(s). Thus, it is intriguing whether the same physical mechanism in higher-dimensional space can be responsible for the cylinder condition, thus the passive scalar and the reduced model. A specific problem is the following. Suppose we have two passive scalars advected by a Navier-Stokes fluid in the three-dimensional (3D) Euclidean space $\mathbb{E}^3$, which can be viewed as the reduction from the five-dimensional flow with cylinder conditons on two of the coordinates. If we want to have some unity in the treatment to explore the properties of the scalars, a naive idea can be that we might obtain new perspectives by simply setting up fast rotations in $\mathbb{E}^5$ from which some generalized form of TPT would imply the necessary cylinder conditions. The definite answer to such a problem, its generalizations and the relative issues requires a systematic study of flows with fast rotation(s) in high spatial dimensions.

For a neat presentation, most of the main points of our results will be summarized in the  \textit{lemma}-\textit{theorem}-\textit{corollary} fashion, which should not be understood to be the signal of requiring strong mathematical background or high degree of mathematical rigor from the audiences. Especially the writing should be well understandable for fluid mechanists who, if not, are however suggested to get acquainted with the language of \textit{differential forms}, from the works of, say, Arnold\cite{Arnold1989Book}, Besse \& Frisch\cite{BesseFrischJFM17} and Tur \& Yanovsky\cite{TurYanovskyJFM93}, containing friendly written necessary materials in a nutshell, or from standard texts for more comprehensive knowledge.

\section{Geometrical remarks and models}\label{sec:geometricalRemarks}
In the following first subsection, we offer the relevant fluid-geometry remarks, mainly just to warm up with the notions and backgrounds for the lemma in the second subsection and later developments.

\subsection{Flows in the inertial frame: integral surface, (high-order) Kelvin theorems}

Let $\verb"U" := \sum_{i=1}^d u_i \verb"d" x_i$ be an element of 1-forms in the space $\mathfrak{g}^*$ dual to the Lie algebra $\mathfrak{g}$ of the incompressible velocity vector $u=\sum_i u_i\partial_i$ (or $\bm{u}$ for notation of a field), then its exterior derivative $\verb"d" \verb"U"$ is the vorticity 2-form.
With the introduction of the Bernoulli function $B=P+\bm{u}^2/2$ (we are not interested in the so-called conservative force as the gradient of a potential which is usually also included), the Lie derivative $L_{u}$ along $\bm{u}$ and the interior product $\iota_u$ by $u$, the ideal Euler equation for the dynamics of such objects in $\mathbb{E}^d$ reads [for reference of correspondence with the probably more familiar equations in the vector form, c.f., Eqs. (\ref{eq:continuity},\ref{eq:uh3dsr},\ref{eq:uv3dsr},\ref{eq:uh4dsr},\ref{eq:uv4dsr}) below]
\begin{eqnarray}
(\partial_t + \iota_u \verb"d")\verb"U" =(\partial_t + L_{u}) \verb"U" - \verb"d"\iota_u \verb"U"=(\partial_t + L_{u}) \verb"U" - \verb"d"\bm{u}^2=-\verb"d" B,\label{eq:iHDu}\\
(\partial_t + L_{u})\verb"d" \verb"U" = 0.\label{eq:iHDo}
\end{eqnarray}

Since $\verb"d" \verb"U"$ is Lie invariant, the \textit{exact} 4-form
\begin{equation}\label{eq:bivorticity}
\verb"d" \verb"U" \wedge \verb"d" \verb"U" = \verb"d"(\verb"U" \wedge \verb"d" \verb"U")
\end{equation}
is also invariant, and its Hodge dual $\star(\verb"d" \verb"U" \wedge \verb"d" \verb"U")$ is 
a 1-form in $\mathbb{E}^5$. 
The vanishing interior product $\iota_{\dot{\gamma}} \verb"d"(\verb"U" \wedge \verb"d" \verb"U") = 0$ defines a distribution $\mathcal{D}$\cite{Lee2000Book,Fecko06Book} whose integral manifold is tracked by the flow $\gamma(t)$. 
The integrability\footnote{Trivial for such one-dimensional distribution though, it is nice to go through the formal mathematical procedure in this beginning of gemoetrical remark, in case of necessity of guidance for readers not so familiar with the background.} of $\mathcal{D}$ is assured by the Frobenius theorem (involutivity) which can be validated directly: Suppose $\dot{\gamma}_1$, $\dot{\gamma}_2 \in \mathcal{D}$, we have 
\begin{equation}
\iota_{[\dot{\gamma}_1,\dot{\gamma}_2]} = [L_{\dot{\gamma}_1},\iota_{\dot{\gamma}_2}]= [\iota_{\dot{\gamma}_1}\verb"d" + \verb"d" \iota_{\dot{\gamma}_1}, \iota_{\dot{\gamma}_2}],\   \text{thus} \ \iota_{[\dot{\gamma}_1,\dot{\gamma}_2]} \verb"d" (\verb"U" \wedge \verb"d" \verb"U") = 0.
\end{equation}
So, for $d=5$, the integral manifold is precisely a line (the `bi-vorticity line', if a terminology is needed). 
We then have the Helmholtz-type theorem for the frozen-in bi-vorticity line. 
This result is obviously extendable to any odd dimension $d=2m+1$, by working with $(\verb"d" \verb"U")^{2m}$ in a similar way, i.e.,
\begin{equation}
\text{the distribution $\mathcal{D}$ defined by} \ \iota_{\dot{\gamma}} (\verb"d" \verb"U")^{2m}. \label{eq:2mD}
\end{equation}

Note that the interior product with the volume form $\iota_{\bm{\omega}} \mu = \verb"d" \verb"U" \wedge \verb"d" \verb"U"$ defines the vorticity vector $\bm{\omega}$ in $\mathbb{E}^5$, which also generates a `vorticity line'\cite{ArnoldKhesin98Book} which however should not be confused with the above bi-vorticity line $\gamma(t) \leftrightarrow \bm{r}(t)$. 

The lines can of course be generalized to \textit{integral surfaces} of the distributions of dimension $\Delta<d-(2\delta)$ (because the object is not decomposable in general: c.f., Appendix C of Fecko\cite{FeckoJGP17})  with $\delta < m$ replacing $m$ in $(\verb"d" \verb"U")^{2m}$, to have $(\verb"d" \verb"U")^{2\delta}$, in Eq. (\ref{eq:2mD}) for defining the distribution $\mathcal{D}$.

What is of closer relevance to our work when carried over to rotating frames is the \textit{integral surface}, with dimension $\Delta \le d-2$, of the \textit{distribution} in $\mathbb{E}^d$
\begin{equation}\label{eq:dUdistribution}
  \mathcal{D}:=\{ \ \text{vectors $\dot{\gamma}$ such that} \ \iota_{\dot{\gamma}}\verb"d" \verb"U" = 0 \ \text{holds} \ \}.
\end{equation}
When $d=3$, the integral surface is the vorticity line which is of traditional importance.\cite{WuMaZhou} With $\xi = \partial_t + u$ and $\Psi = \verb"U" - B\verb"d" t$ in the extended phase space spanned by both the spatial and temporal components (studying hydrodynamics in the space-time Euclidean, not Minkovskii, space has also a long history: c.f., Subin \& Vedan\cite{SubinVedanZAMP2004} and references therein), and, with $\diamond$ 
adding to $\verb"d"$ the differential with respect to $t$, the Euler equation is the interior product between $\xi$ and $\diamond \Psi$ 
\begin{eqnarray}
\iota_{\xi} (\diamond 
\Psi) = 0,\label{eq:succinct}
\end{eqnarray}
which is used to derive the local (high-order) invariants\cite{SubinVedanZAMP2004} and to introduce over the flowing closed chain $c(t)$ the \textit{relative invariant} $\int_c  \verb"U"$ which includes the well-known Kelvin theorem of circulation invariance (with respect to the flow $u$)\cite{FeckoJGP17}. Of course there are also other higher-order dynamical integral invariants corresponding to the above mentioned higher-order Lie-invariant forms.

Actually, right wedge product with $\verb"d"\verb"U"$ of Eq. (\ref{eq:iHDu}) and left wedge product with $\verb"U"$ of Eq. (\ref{eq:iHDo}) result in
\begin{eqnarray}\label{eq:inertial3form}
(\partial_t+L_u)(\verb"U"\wedge \verb"d"\verb"U") =-\verb"d"[(P-\bm{u}^2/2)\wedge \verb"d"\verb"U"],
\end{eqnarray}
whose exterior derivative leads to
\begin{eqnarray}\label{eq:inertial4form}
(\partial_t+L_u)(\verb"d"\verb"U"\wedge \verb"d"\verb"U") =0;
\end{eqnarray}
and, such a procedure can be repeated $m$ times, each of which results in
\begin{eqnarray}
(\partial_t+L_u)\verb"H"^k =-\verb"d"[(P-\bm{u}^2/2)\wedge \verb"d"(\verb"H"^{k-1})]\ \text{with}\ \verb"H"^k :=\verb"U"\wedge (\verb"d"\verb"U")^k\\\label{eq:inertial2kp1form}
\text{and}\ (\partial_t+L_u)\verb"d"(\verb"H"^{k}) =0 \ \text{with}\ \verb"d"(\verb"H"^{k})=(\verb"d"\verb"U")^{k+1}, \text{for $0\le k \le m-1$}.\label{eq:inertial2kp2form}
\end{eqnarray}

Thus, we have the `$(2k+1)$th order Kelvin circulation invariance Theorem' (the classical Kelvin theorem is then of `first order'): for each $(2k+1)$-chain $c^{2k+1}_t=\Phi_t(c^{2k+1}_0)$ transported by the flow $\Phi_t$ generated by $u$,
\begin{eqnarray}\label{eq:kthInertialKelvin}
\int_{c_t^{2k+1}}\verb"H"^k(t) = \int_{c^{2k+1}_0}\verb"H"^k(0)\ \text{for}\ \verb"H"^k(0)=\Phi_t^*[\verb"H"^k(t)]\ \text{(the `pull back')},\ \text{and}\ \label{eq:kthInertialKelvin}\\
\int_{c_t^{2k+1}}\verb"H"^k = \int_{\partial^{-1}c_t^{2k+1}}\verb"d"(\verb"H"^k) \ \text{for $c^{2k+1}_t$ being the boundary (denoted by `$\partial$') of some surface $\partial^{-1}c^{2k+1}_t$}.\label{eq:kthInertialKelvinHelmholtz}
\end{eqnarray}

Note that all the above formulations apply equally to the compressible barotropic flows \cite{ArnoldKhesin98Book}; and, the above `higher-order Kelvin circulation theorem' can also be seen in Fecko\cite{FeckoAPS2013} for the stationary - Poincar\'e - case with also an Hamiltonian mechanics example [and the nonstationary - Cartan - case in his formulation with Eq. (\ref{eq:succinct}) is just such a matter of replacing symbols. A first reflexion on Taylor's geometrical derivation of TPT reproduced in the introductory discussions might lead to the question whether the results are consistent for different orders.

\subsection{Flows in rotating frames}
Let's note three points: 
\begin{itemize}
\item First, even for compressible flows, the continuity equations for the density and entropy
are unchanged when put in the rotating frame, so we only check the momentum equation. 
\item Second, for the velocity $\bm{u}'$ relative to the \textit{R}otation,
\begin{equation}
\text{$\nabla \cdot \bm{u} = \nabla \cdot \bm{u}'$, and $\partial_z \bm{u} = \partial_z \bm{u}'$, which assures that the TPT theorem formulas read the same for $\bm{u}$ and $\bm{u}'$;}
\end{equation}
so we will drop the ``$'$'' (and, later on, also for other variables relative to any background \textit{R}eference field). 
\item Third, we should be careful in extending our intuition about rotation in $\mathbb{E}^3$ to general $\mathbb{E}^d$: For example, we can think about rotation either in a (horizontal) plane or around a (vertical) axis in $\mathbb{E}^3$, but in terms of the representation of the rotation group $SO(d)$ [and the relevant spectral theory (see below)] or the differential 2-form, the former is more fundamental, because rotation(s) may have either no or multiple axises perpendicular to the rotating plane(s). In particular, in $\mathbb{E}^4$ with only rotation in one plane (\textit{simple} rotation), we have two `rotation axises', which should not be confused with the rotation in the $\mathbb{E}^3$ part of the Minkovski space $\mathbb{M}^{1,3}$ [which, though, in some sense may be treated as $\mathbb{E}^4$ with pure imaginary time or space co-ordinate(s)] as discussed in Fecko \cite{FeckoAPS2013}.
\end{itemize}

For flows in a rotating frame, an \textit{inertial force} should thus be included in Eqs. (\ref{eq:iHDu},\ref{eq:iHDo}). This amounts to calculate the acceleration, i.e., the change rate of the velocity of a point of the rotating frame,  
which is essentially a coordinate transformation problem as presented in Chandrasekhar\cite{Chandrasekhar61Book} for $\mathbb{E}^3$, extendable of course to $\mathbb{E}^d$. Alternatively, one can adopt the functional action (Lagrangian\cite{Lynden-BellKatzRSPA81,GjajaHolmPhD96} or Hamiltonian\cite{ShashikanthJMP12}) and variational approach. The latter has been given by Shashikanth\cite{ShashikanthJMP12}, and we here further develop the results for our purpose:

$\Omega_R$ denoting the 2-form corresponding to the rotation (characterized also by an anti-symmetric matrix, the exponential of which is the rotation matrix, representing possibly multiple planar rotations: see below) 
of the frame and $\verb"X"$ the 1-form corresponding to the position vector $\bm{x}$ , the inertial force entering the left hand side of Eq. (\ref{eq:iHDu}) reads (with $\star$ denoting the Hodge dual)
\begin{equation}\label{eq:iF}
  L_{u} \verb"U"_R = (\verb"d" \iota_u + \iota_u \verb"d")[\star(\star\Omega_R \wedge \verb"X")],
\end{equation}
with the frame rotating velocity 1-form
\begin{equation}\label{eq:rotatingVelocity}
  \verb"U"_R :=\star(\star\Omega_R \wedge \verb"X").
\end{equation}
Following the terminology in $\mathbb{E}^3$, we have 
\begin{proposition}
In the frame with uniform rotation $\Omega_R$ in $\mathbb{E}^d$, the $\verb"d" \iota_u$ part on the right hand side of Eq. (\ref{eq:iF}) is exact and contributes to the \textit{centrifugal force}, and the $\iota_u \verb"d"$ part to the `\textit{Coriolis force}'
.
\end{proposition}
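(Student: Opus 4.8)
The plan is to read Eq.~(\ref{eq:iF}) as nothing more than Cartan's ``magic'' formula $L_u = \verb"d" \iota_u + \iota_u \verb"d"$ applied to the frame-velocity $1$-form $\verb"U"_R$, and then to attach the two physical names to the two resulting pieces. The term $\verb"d" \iota_u \verb"U"_R$ is manifestly exact: $\iota_u \verb"U"_R$ is a $0$-form (the scalar $\bm{u}\cdot\bm{u}_R$), so its exterior derivative is a pure gradient. Being exact, it can be swept into the Bernoulli/pressure head exactly as the $-\verb"d" B$ and $-\verb"d" \bm{u}^2$ terms of Eq.~(\ref{eq:iHDu}), and therefore leaves the vorticity $2$-form $\verb"d" \verb"U"$ untouched. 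This conservative, curl-free, vorticity-inert character is precisely that of the centrifugal force, whose $\mathbb{E}^3$ potential $\frac{1}{2}|\bm{\Omega}\times\bm{x}|^2$ I would recover by writing the gradient out in coordinates and comparing.

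For the second piece the key step is the identity $\verb"d" \verb"U"_R = 2\Omega_R$ in $\mathbb{E}^d$. First I would use the Hodge-duality relation between a wedge with $\verb"X"$ and an interior product to rewrite the definition~(\ref{eq:rotatingVelocity}) as $\verb"U"_R = \star(\star\Omega_R \wedge \verb"X") = \iota_X \Omega_R$ (up to the dimension-dependent $\star\star$ sign), where $X$ is the position vector field dual to $\verb"X"$, i.e.\ the Euler/dilation field $X=\sum_i x^i\partial_i$. Since the rotation is uniform, $\Omega_R$ has constant coefficients and $\verb"d" \Omega_R = 0$, so Cartan's formula collapses to $\verb"d" \verb"U"_R = \verb"d" \iota_X \Omega_R = L_X \Omega_R$. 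Because $X$ generates dilations ($L_X \verb"d" x^i = \verb"d" x^i$) it scales any constant-coefficient $p$-form by $p$; for the $2$-form $\Omega_R$ this gives $L_X \Omega_R = 2\Omega_R$, hence $\verb"d" \verb"U"_R = 2\Omega_R$. Consequently the second piece is $\iota_u \verb"d" \verb"U"_R = 2\,\iota_u \Omega_R$, whose $\mathbb{E}^3$ reduction is $2\,\bm{\Omega}\times\bm{u}$ --- the Coriolis force, with its hallmark factor $2$ appearing directly as the dilation weight of a $2$-form. A transparent cross-check is the component form $(U_R)_i = \Omega_{ij}x^j$ with constant antisymmetric $\Omega_{ij}$, giving $\verb"d" \verb"U"_R = \Omega_{ij}\,\verb"d" x^j \wedge \verb"d" x^i$ and reproducing the factor $2$.

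The hard part will be bookkeeping rather than conceptual. First, I must pin down the sign and normalization in the Hodge identity $\star(\star\Omega_R \wedge \verb"X") = \pm\,\iota_X \Omega_R$ uniformly in $\mathbb{E}^d$, carrying the $(-1)^{p(d-p)}$ factors from $\star\star$ so that the ``$2$'' in $\verb"d" \verb"U"_R = 2\Omega_R$ comes with the right sign; I would anchor this on the $d=3$ case, where $\verb"U"_R = \Omega(x\,\verb"d" y - y\,\verb"d" x)$ and $\verb"d" \verb"U"_R = 2\Omega\,\verb"d" x \wedge \verb"d" y$, and then track the dimensional sign outward. Second, the identification of the exact piece with the \emph{centrifugal} force is strictly by specialization: the robust mathematical content is its exactness (hence its conservative, vorticity-inert role), while the centrifugal reading is fixed by the $\mathbb{E}^3$ coordinate comparison above and generalized in $\mathbb{E}^d$ as the centripetal potential associated with $\Omega_R$. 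Neither difficulty threatens the structure of the argument, which rests entirely on Cartan's formula together with $\verb"d" \verb"U"_R = 2\Omega_R$.
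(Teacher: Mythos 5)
Your reading of the proposition is structurally the same as the paper's: the paper never writes a standalone proof, but its justification is exactly what you give --- Eq.~(\ref{eq:iF}) \emph{is} Cartan's formula applied to $\verb"U"_R$, the piece $\verb"d"\iota_u\verb"U"_R$ is the differential of the $0$-form $\iota_u\verb"U"_R$ and hence exact and absorbable into the effective head, and the non-exact piece is $\iota_u\verb"d"\verb"U"_R=2\iota_u\Omega_R$; this is precisely how the paper passes to Eqs.~(\ref{eq:rLie3},\ref{eq:rLie2}), where all gradients sit under one $\verb"d"$ and $2\iota_u\Omega_R$ stands alone as the Coriolis term. Where you genuinely differ is the proof of the key identity $\verb"d"\verb"U"_R=2\Omega_R$ (Lemma~\ref{lm:rotatingRate}): you obtain it intrinsically from $\verb"U"_R=\iota_X\Omega_R$ with $X=\sum_i x_i\partial_i$ the dilation field, so that $\verb"d"\verb"U"_R=L_X\Omega_R-\iota_X\verb"d"\Omega_R=L_X\Omega_R=2\Omega_R$, the $2$ being the dilation weight of a constant-coefficient $2$-form. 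The paper instead invokes the spectral theorem (Eq.~(\ref{eq:ST}), Lemma~\ref{lm:Om}) to write $\Omega_R=\sum_i\lambda_i\verb"d"x_{2i-1}\wedge\verb"d"x_{2i}$, after which the identity is an elementary planar computation. Your route is shorter and explains the factor of $2$ conceptually; the paper's route additionally furnishes the adapted coordinates on which everything downstream (the TPT examples of Secs.~\ref{sec:E3}--\ref{sec:E5} and the dispersion relations) is computed, so it is not wasted effort there.

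The one step of yours that would actually fail is the centrifugal identification. You propose to recover the $\mathbb{E}^3$ potential $\frac{1}{2}|\bm{\Omega}\times\bm{x}|^2$ by writing $\verb"d"\iota_u\verb"U"_R$ out in coordinates; but $\iota_u\verb"U"_R=\bm{u}\cdot(\bm{\Omega}\times\bm{x})$ is linear in the fluid velocity and vanishes with it, whereas the centrifugal potential is velocity-independent and quadratic in $\Omega$ --- no coordinate comparison can equate the two functions. The centrifugal force enters this formalism only through the rearrangement of kinetic energies between absolute and relative velocity, $\frac{1}{2}|\bm{u}+\bm{u}_R|^2=\frac{1}{2}|\bm{u}|^2+\iota_u\verb"U"_R+\frac{1}{2}|\bm{u}_R|^2$, whose last, velocity-independent term is the centrifugal potential; the exact piece $\verb"d"\iota_u\verb"U"_R$ ``contributes to'' the centrifugal force only in the bookkeeping sense that both end up inside one and the same gradient in Eq.~(\ref{eq:rLie3}). (Equivalently, evaluating the inertial force on the frame motion $u=u_R$ gives $\iota_{u_R}\verb"U"_R=|\bm{\Omega}\times\bm{x}|^2$ --- the full square, not half --- while the ``Coriolis'' piece $2\iota_{u_R}\Omega_R=-\verb"d"|\bm{\Omega}\times\bm{x}|^2$ is then itself exact, and the centripetal acceleration is a combination of the two, not either alone.) So the robust content of your argument --- exactness of the $\verb"d"\iota_u$ part, hence its irrelevance to the Lie-advection law for $\verb"d"(\verb"U"+\verb"U"_R)$, leaving $2\iota_u\Omega_R$ as the only inertial force surviving exterior differentiation --- is correct and is all the paper ever uses; but the advertised coordinate verification of the centrifugal reading should be replaced by the kinetic-energy rearrangement above, which is also the honest reading of the proposition's word ``contributes.''
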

Later, we will further apply the \textit{spectral theorem} to the skew-symmetric matrix corresponding to $\Omega_R$, for simplifying the explicit calculations and analyses.

Combining Eqs. (\ref{eq:iHDu},\ref{eq:iHDo},\ref{eq:iF},\ref{eq:rotatingVelocity}), we have the Lie-invariance law in the rotating $\mathbb{E}^d$
\begin{eqnarray}
  (\partial_t + L_{u}) (\verb"U" + \verb"U"_R)  = -\verb"d"(P-\bm{u}^2/2),\label{eq:rLie0}\\
  (\partial_t + L_{u}) \verb"d"(\verb"U" + \verb"U"_R)  = 0.\label{eq:rLie1}
\end{eqnarray}
Thus, with the replacement of $\verb"d" \verb"U" \to \verb"d"(\verb"U" + \verb"U"_R)$, many geometrical results in the inertial frame carry over, \textit{mutatis mutandis}, in particular, the \textit{integral surface} generalizing the Helmholtz vortex line and tube, and, the relative integral invariant generalizing the circulation in Kelvin's theorem.

Actually, according to the Lie theory\cite{Fecko06Book}, rotation $R \in SO(d)$ can be represented as the exponential of an anti-symmetric matrix $A$ and we have the \textit{spectral theory}\cite{YoulaCanJMath61}:
$A$ is a normal matrix, subjecting to the \textit{spectral theorem}, thus (block) diagonalizable by a special orthogonal transformation. More definitely, for $d=2m$ we have 
$A = Q\Lambda Q^\textsf{T}$ where $Q$ is orthogonal and
\begin{eqnarray}
\text{$\Lambda =
\begin{bmatrix}
 \Lambda_1 &  & & \\
  & \Lambda_2 & & O \\
  &  & \ddots &  &\\
& O & &\Lambda_n & & & \\
    & & & & & o \\
\end{bmatrix}
, \ \ \ \ \ 
\begin{matrix}
\text{
block diagonal with $O$ and $o$ representing (blocks) of $0$s and with real \textit{rotating rate},}\\
\text{$\lambda_i$, in the $2 \times 2$ block $
\Lambda_i = \left(
    \begin{array}{cc}
    0 & \lambda_i \\
    -\lambda_i & 0 \\
    \end{array}
\right)
\ \text{for $i=1$, ..., $n$}
$,}\\
\text{being the coefficients of the purely imaginary eigenvalues: When $d=2m+1$, $\Lambda$}\\ 
\text{presents at least one extra row and column of $0$s, indicating the $d$th (rotating) axis.
}
\end{matrix}
$}
\label{eq:ST}
\end{eqnarray}

\begin{definition}
If there is only one component, say, $\lambda_i$, of $\lambda$ is nonvanishing, the rotation is \textit{simple}; otherwise, \textit{multiple}.\footnote{When we treat the rotation, represented in our special case by $\Omega_R$ but exhibiting multiple plannar rotations, as a whole, it is singular; otherwise, plural.}
\end{definition}
The spectral theorem indicates that the rotation can be decomposed into sub-rotations in mutually orthogonal 2-planes. Thus, we have
\begin{lemma}\label{lm:Om}
$\Omega_R$ can be decomposed into `orthogonal'\footnote{Note that we adopt such a terminology by the property that the wedge product of one with another never vanishes, which superficially in words seems contradicting but actually is consistent with the notion of orthogonal vectors whose inner/dot product vanishes.} 2-forms, explicitly in terms of the coordinates of $\mathbb{E}^d$, with appropriate choice of the indexes/ordering,
\begin{equation}\label{eq:Om}
  \Omega_R = \sum_{i=1}^m\lambda_i \verb"d"x_{2i-1} \wedge \verb"d"x_{2i}\ \text{for} \ 2m=d \ \text{or} \ 2m+1=d;
\end{equation}
\end{lemma}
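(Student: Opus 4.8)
The plan is to read the decomposition off directly from the spectral normal form $A=Q\Lambda Q^{\mathsf T}$ already recorded in Eq.~(\ref{eq:ST}), exploiting the fact that a rotation of the coordinate frame acts on the components of the 2-form $\Omega_R$ in exactly the same way that an orthogonal conjugation acts on the skew matrix $A$. First I would make the identification explicit, writing $\Omega_R=\sum_{j<k}A_{jk}\,dx_j\wedge dx_k=\tfrac12\sum_{j,k}A_{jk}\,dx_j\wedge dx_k$ with $A_{jk}=-A_{kj}$ the antisymmetric matrix of the frame rotation. The content of the lemma then reduces to the purely algebraic statement that this skew matrix can be brought to the block form $\Lambda$ by a special-orthogonal change of basis.

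Next I would invoke the spectral theorem quoted in the text: since $A$ is real and skew-symmetric it is normal, so there is an orthogonal $Q$ with $Q^{\mathsf T}AQ=\Lambda$, block-diagonal with the $2\times 2$ blocks $\Lambda_i$ carrying the real coefficients $\lambda_i$ of the purely imaginary eigenvalue pairs $\pm i\lambda_i$, together with a zero block of size $d-2n$. The key observation, which I would state and then confirm by the standard rank-2 tensor transformation law, is that introducing new orthonormal coordinates $x=Qy$ turns the component array of the 2-form from $A$ into $Q^{\mathsf T}AQ=\Lambda$; hence in the $y$-coordinates $\Omega_R=\sum_{j<k}\Lambda_{jk}\,dy_j\wedge dy_k=\sum_{i=1}^{n}\lambda_i\,dy_{2i-1}\wedge dy_{2i}$, the zero block contributing nothing. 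Relabelling $y\to x$ and padding the sum with vanishing $\lambda_i$ up to $i=m$ yields the claimed formula $\sum_{i=1}^m\lambda_i\,dx_{2i-1}\wedge dx_{2i}$, valid verbatim in the even case $d=2m$ and, in the odd case $d=2m+1$, with the single extra zero row and column of $\Lambda$ corresponding to the leftover axis $x_d$ that never enters the wedge sum.

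Finally I would verify the side claim about ``orthogonality'' in the sense of the footnote. For $i\neq j$ the four indices $2i-1,2i,2j-1,2j$ are distinct, so $(\lambda_i\,dx_{2i-1}\wedge dx_{2i})\wedge(\lambda_j\,dx_{2j-1}\wedge dx_{2j})=\lambda_i\lambda_j\,dx_{2i-1}\wedge dx_{2i}\wedge dx_{2j-1}\wedge dx_{2j}\neq 0$ whenever $\lambda_i\lambda_j\neq 0$, which is exactly the non-vanishing-wedge criterion and exhibits the summands as living in mutually complementary 2-planes, matching the spectral theorem's ``mutually orthogonal 2-planes.''

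The only genuine subtlety, and the step I expect to require care, is insisting that the diagonalizing transformation be a proper rotation, $Q\in SO(d)$, rather than merely $Q\in O(d)$: a frame change must preserve both the metric and the orientation of $\mathbb{E}^d$. The normal form is \emph{a priori} produced by an orthogonal $Q$ that might have $\det Q=-1$; I would remove this by absorbing a sign flip into one block, since swapping the two axes of a single $\Lambda_i$ sends $\lambda_i\to-\lambda_i$ while changing $\det Q$ by $-1$ (or, in the odd case, by flipping the orientation of the zero axis), so that without loss of generality $\det Q=1$. This is precisely the freedom the lemma grants through the phrase ``appropriate choice of the indexes/ordering,'' and it also explains why the $\lambda_i$ are \emph{signed} rotating rates. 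No eigenvalue-multiplicity difficulty arises, because the spectral theorem delivers the block form regardless of repeated or vanishing $\lambda_i$.
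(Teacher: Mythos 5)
Your proposal is correct and takes essentially the same route as the paper, which obtains Eq.~(\ref{eq:Om}) directly from the spectral normal form~(\ref{eq:ST}) of the skew-symmetric generator of the rotation, read off in the adapted orthonormal coordinates ("The spectral theorem indicates that the rotation can be decomposed into sub-rotations in mutually orthogonal 2-planes. Thus, \dots"). The details you supply --- the identification $\Omega_R=\tfrac12\sum_{j,k}A_{jk}\,\verb"d"x_j\wedge \verb"d"x_k$, the transformation law $A\mapsto Q^{\mathsf T}AQ$ under $x=Qy$, the wedge-product orthogonality check, and the repair of $\det Q=-1$ by a sign/ordering flip --- are exactly what the paper leaves implicit in the phrase ``appropriate choice of the indexes/ordering.''
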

and then follows
\begin{lemma}\label{lm:rotatingRate}
\begin{equation}
\text{For}\ \verb"U"_R = \star(\star\Omega_R \wedge \verb"X") \ \text{with a 2-form rotating rate $\Omega_R$},
\ \verb"d"\verb"U"_R = 2\Omega_R. \label{eqLemma1:rotatingRate}
\end{equation}
\end{lemma}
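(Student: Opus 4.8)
The plan is to reduce the identity to a single planar rotation via Lemma~\ref{lm:Om} and then to compute the two successive Hodge duals in coordinates, the factor $2$ emerging from the antisymmetry of the coordinate 2-form.

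First I would exploit linearity. The operations $\star$, wedging on the right with the fixed 1-form $\verb"X"=\sum_k x_k\,\verb"d"x_k$, and the exterior derivative $\verb"d"$ are all $\R$-linear, so by the decomposition $\Omega_R=\sum_{i=1}^m\lambda_i\,\verb"d"x_{2i-1}\wedge\verb"d"x_{2i}$ of Lemma~\ref{lm:Om} it suffices to prove $\verb"d"\star(\star\omega\wedge\verb"X")=2\,\omega$ for a single block $\omega:=\verb"d"x_a\wedge\verb"d"x_b$ with $a=2i-1,\ b=2i$, and then sum against the $\lambda_i$.

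Second, I would evaluate the inner dual: $\star\omega$ is, up to the orientation sign, the $(d-2)$-form $\bigwedge_{j\ne a,b}\verb"d"x_j$ consisting of all the complementary coordinate differentials. Wedging with $\verb"X"$ then annihilates every term $x_j\,\verb"d"x_j$ with $j\ne a,b$, because $\verb"d"x_j$ already appears in $\star\omega$; only the two terms $x_a\,\verb"d"x_a$ and $x_b\,\verb"d"x_b$ survive, giving a $(d-1)$-form. Applying $\star$ once more returns a 1-form. Rather than grind the signs, I would note that the whole map $\omega\mapsto\star(\star\omega\wedge\verb"X")$ is linear and natural in the two indices, while $\omega$ is antisymmetric under $a\leftrightarrow b$ and $\verb"X"$ is symmetric; the output must therefore be of the form $c\,(x_a\,\verb"d"x_b-x_b\,\verb"d"x_a)$ for a single constant $c=\pm1$, and checking the classical $\mathbb{E}^3$ case $\Omega_R=\Omega\,\verb"d"x\wedge\verb"d"y$ (where $\star(\star\Omega_R\wedge\verb"X")$ reproduces the 1-form metric-dual to $\bm{\Omega}\times\bm{x}$) fixes $c=+1$ consistently with the stated convention.

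Finally I would differentiate the block,
\[
\verb"d"\big(x_a\,\verb"d"x_b-x_b\,\verb"d"x_a\big)=\verb"d"x_a\wedge\verb"d"x_b-\verb"d"x_b\wedge\verb"d"x_a=2\,\verb"d"x_a\wedge\verb"d"x_b=2\,\omega ,
\]
and sum over $i$ with weights $\lambda_i$ to obtain $\verb"d"\verb"U"_R=2\Omega_R$. Conceptually the same computation reads: $\verb"U"_R$ is the metric dual of the instantaneous rigid-rotation field $\bm{x}\mapsto A\bm{x}$ with $A$ the skew matrix associated to $\Omega_R$, and the exterior derivative of the 1-form dual to a linear field $M\bm{x}$ is $\sum_{k,l}M_{kl}\,\verb"d"x_l\wedge\verb"d"x_k$, whose antisymmetrization doubles precisely because $M=A$ is skew --- this is the invariant origin of the factor $2$. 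The only genuine difficulty is the orientation/Hodge-sign bookkeeping distinguishing $+2\Omega_R$ from $-2\Omega_R$; everything else (the surviving-terms argument and the factor $2$) is robust, and the sign is pinned by the convention already fixed in the statement together with the $\mathbb{E}^3$ consistency check.
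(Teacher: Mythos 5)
Your route is the paper's route: the paper states Lemma~\ref{lm:rotatingRate} with no separate proof, as something that ``then follows'' from the block decomposition of Lemma~\ref{lm:Om}, and your reduction to a single block $\omega=\verb"d"x_a\wedge\verb"d"x_b$, the surviving-terms evaluation of $\star\omega\wedge\verb"X"$, and the final differentiation $\verb"d"(x_a\verb"d"x_b-x_b\verb"d"x_a)=2\,\verb"d"x_a\wedge\verb"d"x_b$ are exactly the computation the paper leaves implicit. That part of your argument is correct.

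The gap is in how you pin the sign. Your symmetry/naturality argument does force the output to be $c\,(x_a\verb"d"x_b-x_b\verb"d"x_a)$ with $c=\pm1$, and equivariance under coordinate permutations makes $c$ the same for every block \emph{within a fixed dimension}; but nothing in the argument makes $c$ independent of $d$, so the $\mathbb{E}^3$ check fixes the sign only for $d=3$ --- the classical case, whereas the whole content of the lemma is general $d$. This is not a vacuous worry: Hodge-star signs genuinely can depend on dimension (e.g.\ $\star\star=(-1)^{k(d-k)}$ on $k$-forms, which on $1$-forms alternates between odd and even $d$), so ``a single constant $c$'' is an assertion, not a consequence of what you proved. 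Fortunately your own coordinate computation closes it in one line: writing $\{j_1<\cdots<j_{d-2}\}$ for the complement of $\{a,b\}$, the inner dual carries $\epsilon_1=\mathrm{sgn}(a,b,j_1,\dots,j_{d-2})$, the outer dual carries $\epsilon_2=\mathrm{sgn}(j_1,\dots,j_{d-2},a,b)$, and the result is $\epsilon_1\epsilon_2\,(x_a\verb"d"x_b-x_b\verb"d"x_a)$; the two permutations differ by moving $a$ and $b$ each past the same $d-2$ complementary indices, i.e.\ by $2(d-2)$ transpositions, so $\epsilon_1=\epsilon_2$ and $c=\epsilon_1\epsilon_2=+1$ for every $d$ and every block. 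With that parity remark inserted (in place of the appeal to $\mathbb{E}^3$), your proof is complete and coincides with the paper's intended one.
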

Thus, we can transform Eqs. (\ref{eq:rLie0},\ref{eq:rLie1}), already given by Shashikanth\cite{ShashikanthJMP12} (and traditionally, in $\mathbb{E}^3$, in terms of vector fields\cite{Chandrasekhar61Book}), into the much more explicit and convenient form for our purposes
\begin{eqnarray}
  (\partial_t + L_{u}) \verb"U" + 2\iota_u \Omega_R  = -\verb"d"(P-\bm{u}^2/2-\iota_u \verb"U"_R),\label{eq:rLie3}\\
  (\partial_t + L_{u}) (\verb"d"\verb"U" + 2\Omega_R)  = 0.\label{eq:rLie2}
\end{eqnarray}
These, especially the `Coriolis force' $2\iota_u \Omega_R$ in Eq. (\ref{eq:rLie3}), are what we are familiar with in $\mathbb{E}^3$, and now they turn out to be the same for general $\mathbb{E}^d$.

\section{Taylor-Proudman theorems and relatives in $\mathbb{E}^d$}\label{sec:TPtt}

\subsection{\label{sec:theorem}The theorem}

From Eq. (\ref{eq:rLie1}), and by the definition of Lie derivative, to first order accuracy for small $t$, the integral over a circuit (1-boundary)
\begin{equation}\label{eq:circulationLie}
  \int_{c(t)}(\verb"U"+\verb"U"_R) = \int_{c(0)}(\verb"U"+\verb"U"_R) + t\int_{c(0)} L_u (\verb"U"+\verb"U"_R) ,
\end{equation}
where $c(t)=\Phi_t[c(0)]$ and $\verb"U"+\verb"U"_R$ in the integrands on the right hand side is the pullback, $\Phi_t^*$ of the flow $\Phi_t$ generated by $u$, of that on the left hand side: $d\Phi_t^* (\verb"U"+\verb"U"_R)/dt|_{t=0} = L_u (\verb"U"+\verb"U"_R)$. And, the following \textit{integral invariant}, as the generalization of the circulation in Kelvin's theorem\cite{FeckoJGP17}, is implied by Eqs. (\ref{eq:rotatingVelocity},\ref{eq:rLie1}) in $\mathbb{E}^d$
\begin{equation}\label{eq:circulationKelvin}
  \int_{c(t)}(\verb"U"+\verb"U"_R) = \int_{c(0)}(\verb"U"+\verb"U"_R).
\end{equation}
\begin{definition}\label{def:fast}
The rotation is `fast' with some component(s) of $\lambda$ being large to make the relevant term(s) dominant. In fast rotation, each component $\lambda_i$ is effectively either large or vanishing, in the asymptotic sense, symbolically $\Omega_R \to \infty$. If more than one components of $\lambda$ are large, they can be of the same or different orders (in the sense of asymptotic analysis\cite{BenderOrszag1999Book}): in the former case, the multiple rotations are \textit{simultaneously fast}, and the latter \textit{independently fast}.
\end{definition}
\begin{remark}
If there is some nonvanishing $\lambda_i$ which is not large, with the corresponding term becoming subdominant, it will be treated as effectively vanishing in our analysis with the idea of dominant balance; if both $\lambda_i$ and $\lambda_j$ are large but $\lambda_i << \lambda_j$, then they should be treated separated (with the relevant terms set to zero subsequently: see below) as of different orders in the asymptotic analysis.
\end{remark}

For fast rotation, leaving only the terms indexed with `$_R$', Eq. (\ref{eq:Om}), Lemma \ref{lm:rotatingRate} and the Stokes theorem lead to
\begin{equation}\label{eq:circulationTaylor}
\int_{c(0)} \verb"U"_R = \int_{\partial^{-1}c(0)} \verb"d" \verb"U"_R = 2\int_{\partial^{-1}c(0)} \Omega_R = 2\sum_{i=1}^m \lambda_i \int_{\partial^{-1}c(0)} \verb"d"x_{2i-1} \wedge \verb"d"x_{2i}=: \sum_{i=1}^m 2\lambda_i A_i.
\end{equation}
That is, we have generalized Taylor's\cite{Taylor} observation:
\begin{lemma}
\label{lm:dominantCirculation}
The rotation contribution to Kelvin's circulation is the sum of the projection area $A_i$ of the circuit in the $i$th rotating plane multiplied by twice the corresponding rotation rate $\lambda_i$.
\end{lemma}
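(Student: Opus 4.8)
The plan is to turn the verbal statement into the chain of equalities already displayed in Eq.~(\ref{eq:circulationTaylor}), so the whole argument decomposes into three ingredients applied in order: the Stokes theorem, Lemma~\ref{lm:rotatingRate}, and Lemma~\ref{lm:Om}. First I would write the rotation contribution to the Kelvin circulation as the line integral $\int_{c(0)}\verb"U"_R$ over the $1$-boundary $c(0)$ and invoke Stokes to replace it by the integral of its exterior derivative over any spanning surface, $\int_{c(0)}\verb"U"_R=\int_{\partial^{-1}c(0)}\verb"d"\verb"U"_R$. Since $\verb"d"\verb"U"_R$ is exact and, by Lemma~\ref{lm:rotatingRate}, equals the constant-coefficient (hence closed) $2$-form $2\Omega_R$, the surface integral depends only on the boundary $c(0)$ and not on the particular choice of $\partial^{-1}c(0)$, so the expression is well defined.

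Next I would substitute the two structural lemmas into the integrand. Lemma~\ref{lm:rotatingRate} gives $\verb"d"\verb"U"_R=2\Omega_R$, and Lemma~\ref{lm:Om} resolves $\Omega_R$ into its mutually orthogonal planar components $\sum_{i=1}^m\lambda_i\,\verb"d"x_{2i-1}\wedge \verb"d"x_{2i}$. Because the spectral decomposition makes each $\lambda_i$ a constant on $\mathbb{E}^d$, linearity of the integral lets them pull out, yielding $\int_{c(0)}\verb"U"_R=2\sum_{i=1}^m\lambda_i\int_{\partial^{-1}c(0)}\verb"d"x_{2i-1}\wedge \verb"d"x_{2i}$, which is precisely the intermediate form in Eq.~(\ref{eq:circulationTaylor}).

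The only step carrying genuine content is the final identification of $A_i:=\int_{\partial^{-1}c(0)}\verb"d"x_{2i-1}\wedge \verb"d"x_{2i}$ with the projection area of the circuit onto the $i$th rotating $2$-plane. Here I would note that $\verb"d"x_{2i-1}\wedge \verb"d"x_{2i}$ is the Euclidean area $2$-form of the $(x_{2i-1},x_{2i})$-plane, so integrating it over $\partial^{-1}c(0)$ computes the signed area of the orthogonal projection of that surface onto the plane; equivalently, running the planar Stokes formula backwards recovers the boundary integral $\oint_{c(0)} x_{2i-1}\,\verb"d"x_{2i}$, the higher-dimensional analogue of the enclosed area $\mathcal{A}$ in Eq.~(\ref{eq:circulation}). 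Collecting the prefactors then gives $\int_{c(0)}\verb"U"_R=\sum_{i=1}^m 2\lambda_i A_i$, which is the assertion. I expect the main point requiring care to be orientation and sign bookkeeping rather than any analytic difficulty: $A_i$ is a \emph{signed} area fixed jointly by the orientation of $c(0)$ and by the index ordering $(2i-1,2i)$ selected in Lemma~\ref{lm:Om}, and one must verify these conventions are mutually consistent across the sum so that each term $2\lambda_i A_i$ enters with the correct sign relative to its rotation rate.
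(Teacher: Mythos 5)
Your proposal is correct and takes essentially the same route as the paper: Eq.~(\ref{eq:circulationTaylor}) is exactly your chain of Stokes' theorem, then Lemma~\ref{lm:rotatingRate} ($\verb"d"\verb"U"_R = 2\Omega_R$), then the decomposition of Lemma~\ref{lm:Om} with the constants $\lambda_i$ pulled out and $A_i$ identified as the signed projection area. Your extra remarks on surface-independence of the integral and on orientation/sign conventions are sound additions but do not alter the argument.
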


Using Eq. (\ref{eq:circulationTaylor}), we have 
\begin{equation}\label{eq:integralAreaRate}
 \int_{c(0)} L_u \verb"U"_R = \int_{\partial^{-1}c(0)} \verb"d"L_u \verb"U"_R=  2\int_{\partial^{-1}c(0)}L_u \Omega_R =0.
\end{equation}
Because for every 1-boundary $c(0)$ the second term of the right hand side should vanish for all possible surface $\partial^{-1}c(0)$ [whose boundary is $c(0)$], we have obtained (using $L_u = \iota_u \verb"d" + \verb"d" \iota_u$) the (generalized) Taylor-Proudman theorem:
\begin{theorem}\label{th:differentialAreaRate}
For a rotating flow in $\mathbb{E}^d$, with $d$ odd ($=2m+1$) or even ($=2m$), when the time variation is negligible and the rotation is fast, i.e., $\Omega_R \to \infty$, we have asymptotically
\begin{equation}\label{eq:differentialAreaRate}
L_u \Omega_R \to 0, \ \text{i.e.},\ \verb"d" \iota_u \Omega_R = \verb"d" \iota_u \sum_{i=1}^m\lambda_i \verb"d"x_{2i-1} \wedge \verb"d"x_{2i} \to 0 .
\end{equation}
\end{theorem}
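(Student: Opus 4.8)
The plan is to deduce the theorem from the rotating-frame circulation invariant by a dominant-balance argument followed by a localization step. First I would combine the Kelvin-type invariant (\ref{eq:circulationKelvin}) with its first-order-in-$t$ expansion (\ref{eq:circulationLie}): because $\int_{c(t)}(\verb"U"+\verb"U"_R)$ is conserved along the flow $\Phi_t$ generated by $u$, the coefficient of $t$ must vanish, giving $\int_{c(0)}L_u(\verb"U"+\verb"U"_R)=0$ for every material circuit $c(0)$. This is the exact statement from which everything is extracted, and it is the natural starting point because it already packages the geometry (Lemma \ref{lm:rotatingRate}, $\verb"d"\verb"U"_R=2\Omega_R$, and the orthogonal decomposition of Lemma \ref{lm:Om}) into a single scalar balance per circuit.

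Second, I would perform the dominant-balance separation that defines the fast-rotation regime. Writing the conserved integrand as $L_u\verb"U"+L_u\verb"U"_R$, the rotation piece $\verb"U"_R$ scales with the rotation rates $\lambda_i$ (via Lemma \ref{lm:dominantCirculation}, $\int_{c(0)}\verb"U"_R=\sum_i 2\lambda_i A_i$), whereas the relative piece $\verb"U"$ is $O(1)$ in $\lambda$; under the hypothesis of negligible time variation the relative rate $\int_{c(0)}L_u\verb"U"$ is subdominant. Hence at leading order in Definition \ref{def:fast} the conserved quantity is $\int_{c(0)}\verb"U"_R$ alone, so $\int_{c(0)}L_u\verb"U"_R\to 0$. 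Using the Stokes theorem, the fact that $L_u$ commutes with $\verb"d"$, and $\verb"d"\verb"U"_R=2\Omega_R$, this is exactly Eq. (\ref{eq:integralAreaRate}): $\int_{c(0)}L_u\verb"U"_R=2\int_{\partial^{-1}c(0)}L_u\Omega_R\to 0$.

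Third, I would localize and simplify. The relation $\int_{\partial^{-1}c(0)}L_u\Omega_R\to 0$ holds not only for every circuit $c(0)$ but, crucially, for every spanning surface $\partial^{-1}c(0)$; the arbitrariness of that surface is what lets me strip the integral sign and pass to the pointwise $2$-form statement $L_u\Omega_R\to 0$. Finally, since $\Omega_R=\sum_i\lambda_i\verb"d"x_{2i-1}\wedge\verb"d"x_{2i}$ has constant coefficients it is closed, $\verb"d"\Omega_R=0$, so Cartan's formula $L_u=\iota_u\verb"d"+\verb"d"\iota_u$ collapses to $L_u\Omega_R=\verb"d"\iota_u\Omega_R$, yielding the stated form $\verb"d"\iota_u\sum_{i=1}^m\lambda_i\verb"d"x_{2i-1}\wedge\verb"d"x_{2i}\to 0$.

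The hard part is not the differential geometry but the asymptotics in the second step. One must say precisely in what sense, and on which time scale, the relative circulation rate $\int_{c(0)}L_u\verb"U"$ is negligible against the $\lambda_i$-weighted rotation term, and must treat several large $\lambda_i$ consistently: if they are of the same order (simultaneously fast) the balance delivers only the combined $2$-form $L_u\Omega_R\to 0$, whereas if they are of disparate orders (independently fast) one peels them off order by order, as flagged in Definition \ref{def:fast} and the subsequent Remark, obtaining the per-plane statements that imply the combined one a fortiori. A secondary, standard point is justifying the localization from vanishing of the integral over all admissible surfaces to pointwise vanishing of the integrand.
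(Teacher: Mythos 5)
Your proposal is correct and follows essentially the same route as the paper: the rotating-frame Kelvin invariant (\ref{eq:circulationKelvin}) together with the first-order Lie expansion (\ref{eq:circulationLie}), dominant balance isolating the $\verb"U"_R$ contribution via Lemma \ref{lm:dominantCirculation}, Stokes' theorem with $\verb"d"\verb"U"_R=2\Omega_R$ to obtain Eq. (\ref{eq:integralAreaRate}), localization over arbitrary circuits and spanning surfaces, and finally Cartan's formula with $\verb"d"\Omega_R=0$. Your closing remarks on the simultaneously-fast versus independently-fast asymptotics likewise mirror the paper's own Remark following the theorem, so there is nothing to correct.
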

\begin{proof}
See derivations above. 
\end{proof}
\begin{remark}
We have followed Taylor to take the detour to be able to see the geometry and topology, but a `textbook' shortcut would be directly taking with the dominant balance argument in Eq. (\ref{eq:rLie2}) $L_u  \Omega_R = 0$. And, bruteforce calculations without realizing nor using the above Lemma \ref{lm:rotatingRate} for definite $d$ (as we did for good checks of the examples in the next section) can also arrive at the result, but formidable for large $d$ with more than one nonvanishing plane-rotating rates ($\lambda$s). We have resisted expanding the right hand side of Eq. (\ref{eq:differentialAreaRate}) further and then analyze the formal results with Definition \ref{def:fast}, which will lead to a list of corollaries of more definite formulas for specific situations, because it is more illuminating to summarize and explain the following three constituting structures from observing the effects of the subsequent interior product and exterior derivative operators on $\Omega_R$ decomposed from Lemma \ref{lm:Om} (which makes the manipulations so simple as to border on the trivial\footnote{Readers who are not yet familiar with the manipulations of differential forms may first jump to go through the easy and concret examples in Secs. \ref{sec:E3}, \ref{sec:E4}, \ref{sec:E5} below to obtain some experience.}): 
\begin{itemize}
\item First of all, for each nonvanishing (large) $\lambda_i$, the operation of `$\iota_u$' leads to two components $u_{2i-1}\verb"d"x_{2i}$ and $-u_{2i}\verb"d"x_{2i-1}$, on which the further operation of `$\verb"d"$' results in $\lambda_i$ multiplied by $u_{2i-1,2i-1} + u_{2i,2i}$ ($\to 0$, i.e., `incompressibility in the plane' of the flow in that fast rotating plane), with the shorthand notation $u_{i,j}:=\partial_{x_j} u_i$, for the coefficient of the particular component $\verb"d"x_{2i-1} \wedge \verb"d"x_{2i}$ of the final total 2-form. `Physically', it means that actually the projection area in each (fast) rotating plane tend to be invariant, though Lemma \ref{lm:dominantCirculation} says the weighted sum. This is correct, because, for any $c$, we can take circuit $c_i$ in the $x_{2i-1}$-$x_{2i}$ plane circling exactly the projection of $c$ on this plane, thus invariant $A_i$. 
\item The operation of $\verb"d"$ after $\iota_u$ also leads to other `cross terms' with $i \ne j$, such as $(\lambda_i u_{2i-1, 2j-1} + \lambda_j u_{2j, 2i}) \verb"d"x_{2j-1}\wedge\verb"d"x_{2i}$ and $(\lambda_i u_{2i-1, 2j} - \lambda_j u_{2j-1, 2i}) \verb"d"x_{2j}\wedge\verb"d"x_{2i}$. For $r_{ij}:=\lambda_i/\lambda_j$ of order-one amplitude, i.e., \textit{simultaneously fast multiple rotations}, the dominant-balance asymptotic result $(r_{ij} u_{2i-1, 2j-1} + u_{2j, 2i}) =0=(r_{ij} u_{2i-1, 2j} - u_{2j-1, 2i})$. But, in the \textit{independently-fast} case, with $|r_{ij}| \to \infty$, further asymptotic simplification with subsequent dominant balances at different orders is that, separately, $u_{2i-1, 2j-1} = u_{2j, 2i} = u_{2i-1, 2j} = u_{2j-1, 2i} = 0$; that is, finer structures can emerge with realistic value of $|r_{ij}|>>1$.
\item If some $\lambda_i$ vanishes and/or if there is an `extra' coordinate, chosen to be $d = 2m + 1$, the cylinder conditions along the corresponding coordinate axis(es) result due to the fact that only one (large) component of $\lambda_j$ is involved in the resulted coeffecient of the 2-form component in the term containing the differential of this coordinate variable; that is, $u_{j,2i-1} = u_{j,2i} = 0$ and/or $u_{j,2m+1} = 0$, $\forall$ $j\ne i$; and, there is no TPT constraint on $u_{2i-1}$, $u_{2i}$ and/or $u_{2m+1}$, the differential of the corresponding coordinate variable(s) not appearing in our decomposition of $\Omega_R$ thus not following the interior product $\iota_u$.                                             
\end{itemize}
The above asymptotic properties correspond to some reduced models (see below) which are expected to reasonably decoupled from the whole system. Problems such as how much these sub-dynamics are self-autonomous for realistic parameters and whether the formal asymptotic states can indeed be arrived at when the parameters are taken to the limits, among others, are subtle, especially when coupled with the problem of turbulence\cite{CCEHjfm2005,EmbidMajda98,MajdaEmbid98,CambonRubinsteinGodeferdNJP04,
SagautCambon08Book,PouquetMininniPTRSA10}.
\end{remark}

Note that for $d>3$, we can have non-vanishing differential forms of order $k>3$, and it has been shown in Sec. \ref{sec:geometricalRemarks} how to extend the classical Kelvin-Helmholtz theorems to objects related to such $k$ and $d$. Thus following the above analysis, carrying $\verb"H"^{k}$ over to the rotating frame to become $\verb"H"^{k}_T := (\verb"U"+\verb"U"_R)\wedge [\verb"d"(\verb"U"+\verb"U"_R)]^k$, we have for the fast rotation $\text{with}\ \verb"H"^{k-1}_R:= \verb"U"_R \wedge (\verb"d"\verb"U"_R)^{k-1}\ \text{
dominating}\ \verb"H"^{k-1}_T$, 
\begin{corollary}
\begin{equation}
\text{When $\Omega_R \to \infty$}, \ L_u \verb"d"(\verb"H"_R^{k-1}) = 2L_u (\Omega_R)^k = 2\verb"d"\iota_u (\Omega_R)^k = 0, \ \text{for each $k$ ($=1,\ 2,\ ...,\ m-1$)}. \label{eq:kthLie}
\end{equation}
\end{corollary}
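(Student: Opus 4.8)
The plan is to read the displayed chain in the Corollary as the order-$k$ lift of Theorem~\ref{th:differentialAreaRate} (which is exactly the case $k=1$) and to verify its three links in turn: the structural identity expressing $\verb"d"(\verb"H"^{k-1}_R)$ as a wedge power of $\Omega_R$, the Cartan reduction of $L_u$ to $\verb"d"\iota_u$ on that power, and the vanishing of the result in the fast-rotation limit. The guiding observation is that, once Theorem~\ref{th:differentialAreaRate} is granted, the only genuinely new ingredient is the graded-derivation property of the Lie derivative on the exterior algebra; all the rest is closedness bookkeeping.

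First I would settle the leftmost (structural) equality. Since $\verb"H"^{k-1}_R = \verb"U"_R \wedge (\verb"d"\verb"U"_R)^{k-1}$ and $\verb"d"\verb"U"_R = 2\Omega_R$ is exact, the Leibniz rule for $\verb"d"$ annihilates every term carrying a second exterior derivative, leaving $\verb"d"(\verb"H"^{k-1}_R) = (\verb"d"\verb"U"_R)^k = (2\Omega_R)^k$, in exact parallel with the inertial identity $\verb"d"(\verb"H"^{k-1}) = (\verb"d"\verb"U")^k$ of Eq.~(\ref{eq:inertial2kp2form}). Applying $L_u$ then gives the first equality; here I would flag that the printed scalar should read $2^k$ rather than a bare $2$, the two coinciding only at $k=1$, though this constant is immaterial since the chain terminates at zero. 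For the middle equality I would use that $\Omega_R = \tfrac12 \verb"d"\verb"U"_R$ is closed, so that $\verb"d"[(\Omega_R)^k]=0$ by Leibniz, whereupon Cartan's formula $L_u = \iota_u\verb"d" + \verb"d"\iota_u$ collapses to $L_u(\Omega_R)^k = \verb"d"\iota_u(\Omega_R)^k$.

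The decisive rightmost equality carries the physics, and I would reach it by the derivation property: since $L_u$ acts as a graded derivation on wedge products, $L_u(\Omega_R)^k = \sum_{j=1}^{k}(\Omega_R)^{j-1}\wedge(L_u\Omega_R)\wedge(\Omega_R)^{k-j}$, so the conclusion $L_u\Omega_R\to 0$ of Theorem~\ref{th:differentialAreaRate} forces every summand, hence the whole sum, to vanish as $\Omega_R\to\infty$. An independent route, paralleling Eqs.~(\ref{eq:inertial3form})--(\ref{eq:inertial2kp2form}), is to apply the derivation $\partial_t + L_u$ to the $k$-th wedge power of $\verb"d"\verb"U" + 2\Omega_R$: Eq.~(\ref{eq:rLie2}) makes this vanish exactly, and in the fast limit where $2\Omega_R$ dominates it reduces to $(\partial_t+L_u)(\Omega_R)^k=0$, so that dropping $\partial_t$ under the negligible-time-variation hypothesis again yields the claim.

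The main obstacle is not algebraic but asymptotic. One must justify that in the dominant-balance limit the rotation parts genuinely prevail at every order, i.e. that $\verb"H"^{k-1}_R$ dominates $\verb"H"^{k-1}_T$ uniformly in $k$, so that replacing $\verb"d"(\verb"U"+\verb"U"_R)$ by $2\Omega_R$ inside a $k$-fold wedge is legitimate. This is most delicate in the independently-fast multiple-rotation case, where the several $\lambda_i$ live at different orders and the power $(\Omega_R)^k$ mixes them; one must check that the balance respects the ordering fixed in Definition~\ref{def:fast} and the Remark following it, rather than conflating distinct orders. The derivation-property route is attractive precisely because it offloads this entire difficulty onto the already-established Theorem~\ref{th:differentialAreaRate}, leaving only the elementary Leibniz bookkeeping as genuinely new content.
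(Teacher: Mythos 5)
Your proposal is correct and follows essentially the paper's own treatment: your principal route --- the derivation property of $L_u$ applied factor by factor to $(\Omega_R)^k$, so that everything reduces to Theorem~\ref{th:differentialAreaRate} --- is exactly the deduction the paper records in Remark~\ref{rm:differentialAreaRate}, while your alternative route (applying $\partial_t+L_u$ to the $k$-th wedge power of $\verb"d"\verb"U"+2\Omega_R$ via Eq.~(\ref{eq:rLie2}) and then invoking dominance of the rotational part) is the paper's own ``following the above analysis'' derivation of the Corollary. Your flag that the prefactor should read $2^k$ rather than $2$, since $\verb"d"(\verb"H"^{k-1}_R)=(\verb"d"\verb"U"_R)^k=2^k(\Omega_R)^k$, is a correct catch of a typo in the stated chain and is immaterial to the vanishing conclusion.
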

\begin{remark}\label{rm:differentialAreaRate}
Since the Lie derivative satisfies the Leibniz rule, Eq. (\ref{eq:kthLie}) can also be deduced directly from Eq. (\ref{eq:differentialAreaRate}).
\end{remark}

\subsection{\label{sc:examples}TPT Examples in $\mathbb{E}^{3,4,5}$
}
We now demonstrate in the following examples the above results which `resonate' with the inertial-wave discussions in Sec. \ref{sec:inertialWaves} and with the reduced models of Sec. \ref{sec:reducedModels}.
\subsubsection{\label{sec:E3}$\mathbb{E}^3$ case}

For $d=3$, $n=1$ in Eq. (\ref{eq:ST}) and that the rotation rate/vorticity 2-form reads
\begin{equation}\label{eq:E3rr}
  \Omega_R = \lambda \verb"d"x_1\wedge \verb"d"x_2,
\end{equation}
\textit{i.e.}, rotation in the $1$-$2$ plane or around the $x_3$ axis. The right hand side of Theorem \ref{th:differentialAreaRate},
\begin{equation}\label{eq:0coriolis}
   \lambda [(u_{1,1}+u_{2,2})\verb"d"x_1\wedge \verb"d"x_2 + u_{1,3}\verb"d"x_3\wedge \verb"d"x_2 + u_{2,3}\verb"d"x_1\wedge \verb"d"x_3] = 0,
\end{equation}
leads to the classical Taylor-Proudman relations, Eqs. (\ref{eq:cTPE},\ref{eq:cTPEuz}) which also follow Remark \ref{rm:differentialAreaRate}. If we impose incompressibility for the total flow, $u_{1,1}+u_{2,2}+u_{3,3} = 0$, then Eqs. (\ref{eq:cTPEuz}) should be replaced with $u_{3,3} = 0$. Note that the content in the square bracket of Eq. (\ref{eq:0coriolis}) is, in Taylor's notation with slight adjustments, that $[-u_{1,3}\bar{\bm{x}}_1 - u_{2,3}\bar{\bm{x}}_2+(u_{1,1}+u_{2,2})\bar{\bm{x}}_3] \cdot d\bm{S} = -[u_{1,3}\bar{\bm{x}}_1 + u_{2,3}\bar{\bm{x}}_2 - (u_{1,1}+u_{2,2})\bar{\bm{x}}_3] \cdot \bar{\bm{n}}dS \ \forall \ d\bm{S}$, where $\bar{\bm{x}}_{\bullet}$ and $\bar{\bm{n}}$ are unit vectors normal to respectively each directed coordinate plane and the surface element.

\subsubsection{\label{sec:E4}$\mathbb{E}^4$ case}
Now $\Omega_R = \lambda_1 \verb"d"x_1 \wedge \verb"d"x_2 + \lambda_2 \verb"d"x_3 \wedge \verb"d"x_4$ and Theorem \ref{th:differentialAreaRate} reads
\begin{eqnarray}
\lambda_1 [(u_{1,1}+u_{2,2})\verb"d"x_1\wedge \verb"d"x_2 + u_{1,3}\verb"d"x_3\wedge \verb"d"x_2 + u_{1,4}\verb"d"x_4\wedge \verb"d"x_2 
 - u_{2,3}\verb"d"x_3\wedge \verb"d"x_1 - u_{2,4}\verb"d"x_4\wedge \verb"d"x_1]+\nonumber\\
 +\lambda_2 [u_{3,1}\verb"d"x_1\wedge \verb"d"x_4 + u_{3,2}\verb"d"x_2\wedge \verb"d"x_4 + (u_{3,3} +u_{4,4})\verb"d"x_3\wedge \verb"d"x_4 
 - u_{4,1}\verb"d"x_1\wedge \verb"d"x_3 - u_{4,2}\verb"d"x_2\wedge \verb"d"x_3] =0 \label{eq:E4Coriolis},
\end{eqnarray}
following which (or Remark \ref{rm:differentialAreaRate}) is the 4-space Taylor-Proudman results:

In the case of constant \textit{multiple (double) rotations} for $\lambda_1 = r \lambda_2 $ and $r>0$ without loss of generality,
\begin{eqnarray}
u_{1,1} + u_{2,2} = u_{3,3} + u_{4,4} = 0 = r
u_{1,3} + u_{4,2} = r
u_{2,3} - u_{4,1} = r
u_{1,4} - u_{3,2} = r
u_{2,4} + u_{3,1},\label{eq:TP4dr1}
\end{eqnarray}
in which, coordinate transformations $x'_1 = \sqrt{r} x_1$ and $x'_2 = \sqrt{r} x_2$ can be applied to have $\lambda_1'=\lambda_2$, or, in other words, $r =1$ can be used in principle for any realistic finite $r$: For $r$ of $O(1)$ (order one for \textit{simultaneously fast} rotations), no more asymptotic behavior than Eq. (\ref{eq:TP4dr1}) can be derived; for large $r$ ($\gg 1$ for \textit{independently fast} rotations), finer structures 
\begin{equation}\label{eq:E4fiiner}
0 = u_{1,3} = u_{4,2} = 
u_{2,3} = u_{4,1} = 
u_{1,4} = u_{3,2} =
u_{2,4} = u_{3,1}
\end{equation}
are revealed by such coordinate transformation itself, or alternatively as the result of subsequent dominant balances pointed out in Remark \ref{rm:differentialAreaRate}. 

In the case of \textit{simple rotation}, for $\lambda_2 = 0$ without loss of generality,
\begin{eqnarray}
u_{1,1} + u_{2,2} = 0 = u_{1,3} = u_{1,4} = u_{2,3} = u_{2,4}.\label{eq:TPsr}
\end{eqnarray}
\textit{Cylinder condition} of the `horizontal flow' along the axises `perpendicular' to the rotating `horizontal' plane is indeed reached. There is no TPT constraint on $u_3$ nor $u_4$. 

\subsubsection{\label{sec:E5}$\mathbb{E}^5$ case}
The block-diagonal $\Lambda$ with Eq. (\ref{eq:ST}) for $d=5$ is the same as that for $d=4$ except for an additional row and column of zeros, thus no TPT constraint on $u_5$. And, the decomposition of the rotation is formally the same and we find the following extra terms in addition to those in $\mathbb{E}^4$
$$\lambda_1(u_{1,5}\verb"d"x_5\wedge \verb"d"x_2 - u_{2,5}\verb"d"x_5\wedge \verb"d"x_1) + \lambda_2(u_{3,5}\verb"d"x_5\wedge \verb"d"x_4 - u_{4,5}\verb"d"x_5\wedge \verb"d"x_3),$$
and that the Taylor-Proudman theorem in $\mathbb{E}^5$ is just adding to that in $\mathbb{E}^4$ the following cylinder condition along the rotation axis (taken to be $x_5$ here):
\begin{eqnarray}
u_{1,5} = u_{2,5} = u_{3,5} = u_{4,5} = 0\label{eq:TP5d}
\end{eqnarray}
for the fast double-rotation case. If the flow is incompressible, then the the total incompressibliity is decomposed into the first two sub-incompressibilities in Eq. (\ref{eq:TP4dr1}) plus $u_{5,5} = 0$.

For the simple rotation case with $\lambda_2=0$, there is no TPT constraint on $u_{3}$, $u_{4}$ and $u_5$.

It is then clear enough what the general TPTs would be for general $\mathbb{E}^d$, with the above results of $d=3$, $4$ and $5$. In particular, it is noted that there is always at least one coordinate axis left to be the rotating axis for odd $d$, while for even $d$, all coordinates may be involved in forming the rotating planes.

\subsubsection{\label{sec:subsummary}No simple cylinder condition(s), thus the traditional passive scalar(s), from TPT of $d>3$}
We see that it is impossible to obtain 3D passive scalars from mere fast ratations of higher-dimensional flows, because no such cylinder condition results from this kind of physical mechanism. The above are our fundamental results, which are the basis of reduced dynamical models and whose consistency with inertial wave properties will be discussed below (together with remarks on the resonanat wave theory). 

\subsection{TPT and Inertial waves}\label{sec:inertialWaves}
%

Here, we present the inertial waves, the basis for the \textit{resonant wave theory}, in $\mathbb{E}^{4}$ and $\mathbb{E}^5$, to show how the results `resonate' with TPT. For simplicity, let's now focus on the incompressible flows. 

Let's start with Eqs. (\ref{eq:iHDu},\ref{eq:iF}) for incompressible flows. In the appropriately chosen coordinates according to the spectral theorem, (\ref{eqLemma1:rotatingRate}), and $L_u = \iota_u \verb"d" + \verb"d" \iota_u$, we have
\begin{equation}\label{eq:E4RWT}
  \partial_t \verb"U" + \iota_u (\verb"d"\verb"U" + 2 \Omega_R) = -\verb"d"\{ P-\iota_u [\star(\star\Omega_R \wedge \verb"X")] \} =: -\verb"d" \Pi.
\end{equation}
The standard Fourier normal-mode analysis with $u_i = \hat{u}_i \exp\{\hat{i}(\bm{k}\cdot\bm{x} + \varpi t)\} (+ c.c.)$ and $\hat{i}^2=-1$, and similarly for $\Pi$, leads to the dispersion relation between the wave frequency $\varpi$ and vector $\bm{k}$.

The normal-mode analysis of Eq. (\ref{eq:E4RWT}) in $\mathbb{E}^4$ results in
\begin{equation}\label{eq:E4dispersion}
\left[ \begin {array}{ccccc} i\varpi &-2\,\lambda_{{1}}&0&0&ik_{{1}}
\\ \noalign{\medskip}2\,\lambda_{{1}}&i\varpi &0&0&ik_{{2}}
\\ \noalign{\medskip}0&0&i\varpi &-2\,\lambda_{{2}}&ik_{{3}}
\\ \noalign{\medskip}0&0&2\,\lambda_{{2}}&i\varpi &ik_{{4}}
\\ \noalign{\medskip}k_{{1}}&k_{{2}}&k_{{3}}&k_{{4}}&0\end {array}
 \right]
      \left[ \begin {array}{c} \hat{u}_1
\\ \noalign{\medskip}\hat{u}_2
\\ \noalign{\medskip}\hat{u}_3
\\ \noalign{\medskip}\hat{u}_4
\\ \noalign{\medskip}\hat{\Pi}\end {array}
 \right]  = 0,
\end{equation}
the existence of whose nontrivial solutions leads to the equation for the vanishing of the determinant of the coefficient matrix. The matrix being 5$\times$5 though, the determinant is actually third order in $\varpi$ due to the particular structure, and the solutions are
\begin{eqnarray}
    \text{$\varpi_{\pm} = \pm 2\sqrt{ (\lambda_2^2k_1^2 + \lambda_2^2k_2^2) + (\lambda_1^2k_3^2 + \lambda_1^2k_4^2) }/|\bm{k}|$},\label{eq:E4dispersionRelation1}\\
\text{and $\varpi_0 = 0$ }.\label{eq:E4dispersionRelation2}
\end{eqnarray}
\begin{definition}
The motion with $\varpi = 0$ corresponds to a \textit{vortical mode} which may be \textit{natural/unconditional} with no dependence on $\bm{k}$, as Eq. (\ref{eq:E4dispersionRelation2}), and which may also be \textit{imposed/conditional} with particular choice of the dependence of $\bm{k}$, as for Eq. (\ref{eq:E4dispersionRelation1}).
\end{definition}
\begin{remark}\label{rm:E4dispersion}
With $\lambda_2=0=k_4$, Eq. (\ref{eq:E4dispersionRelation1}) reduces to the well-known dispersion relations for the circularly polarized inertial waves in $\mathbb{E}^3$. Taking Eq. (\ref{eq:E4dispersionRelation2}) for the `natural' \textit{vortical mode} into Eq. (\ref{eq:E4dispersion}), we of course obtain Eq. (\ref{eq:TP4dr1}) for the TPT in $\mathbb{E}^4$. In the simple-rotation case, with $\lambda_2 = 0$, say, we see that both Eq. (\ref{eq:E4dispersionRelation2}) for the `natural' vortical/slow mode and the `imposed' vortical mode with $k_3=k_4=0$ in Eq. (\ref{eq:E4dispersionRelation1}) are consistent with the TPT (\ref{eq:TPsr}). Note that taking the above into Eq. (\ref{eq:E4dispersion}), we see that the third and fourth equations are actually $0=0$ without restriction on $\hat{u}_3$ and $\hat{u}_4$, precisely Eq. (\ref{eq:TPsr}) instead of stronger condition as one might think on the first sight. 
\end{remark}
The situation in $\mathbb{E}^5$ can be similarly discussed. 
There are four (two pairs of) dispersion relations, and, as in $\mathbb{E}^3$ or odd dimension $d=2m+1$ in general, there is no `natural' vortical mode like Eq. ({\ref{eq:E4dispersionRelation2}) in $\mathbb{E}^4$. Since the derivation is standard and the computed dispersion relations are lengthy and similar, we only present one of the four results for reference:
\begin{eqnarray}
&&\varpi = \sqrt {2} \Big{[} \big{(}\uline{\lambda_{{2}}^{4}k_{{1}}^{4}+\lambda_{{2}}^{4}k_{{2}}^{4}+
\lambda_{{2}}^{4}k_{{5}}^{4}}+
\lambda_{{1}}^{4}k_{{3}}^{4}+\lambda_{{1}}^{4}k_{{4}}^{4}+\lambda_{{1}}^{4}k_{{5}}^{4}+ \nonumber \\
&&\uline{ 2\lambda_{{2}}^{4}k_{{2}}^{2}k_{{5}}^{2}+2\lambda_{{2}}^{4}k_{{2}}^{2}k_{{1}}^{2}
+2\lambda_{{2}}^{4}k_{{5}}^{2}k_{{1}}^{2} } + 2\lambda_{{1}}^{4}k_{{4}}^{2}k_{{3}}^{2}+
2\lambda_{{1}}^{4}k_{{4}}^{2}k_{{5}}^{2}+2\lambda_{{1}}^{4}k_{{3}}^{2}k_{{5}}^{2}\nonumber \\
&&+ \uline{ 2\lambda_{{2}}^{2}k_{{2}}^{2}\lambda_{{1}}^{2}k_{{4}}^{2}+
2\lambda_{{2}}^{2}k_{{2}}^{2}\lambda_{{1}}^{2}k_{{3}}^{2}+2\lambda_{{1}}^{2}k_{{4}}^{2}
\lambda_{{2}}^{2}k_{{1}}^{2}+2\lambda_{{1}}^{2}k_{{3}}^{2}
\lambda_{{2}}^{2}k_{{1}}^{2} } - \nonumber \\
&& \uwave{ 2k_{{5}}^{2}\lambda_{{1}}^{2}
\lambda_{{2}}^{2}k_{{1}}^{2}-2k_{{5}}^{2}\lambda_{{1}}^{2}\lambda_{{2}}^{2}k_{{2}}^{2}-2k_{{5}}^{2}\lambda_{{1}}^{2}\lambda_{{2}}^{2}k_{{3}}^{2}-2k_{{5}}^{2}\lambda_{{1}}^{2}\lambda_{{2}}^{2}k_{{4}}^{2}-2k_{{5}}^{4}\lambda_{{1}}^{2}\lambda_{{2}}^{2} } \big{)}^{\frac{1}{2}} \nonumber \\
&&+ \uline{ \lambda_{{2}}^{2}k_{{1}}^{2} + \lambda_{{2}}^{2}k_{{2}}^{2}  +\lambda_{{2}}^{2}k_{{5}}^{2} } + \lambda_{{1}}^{2}k_{{3}}^{2}+\lambda_{{1}}^{2}k_{{4}}^{2}+\lambda_{{1}}^{2}k_{{5}}^{2} 
\Big{]}^{\frac{1}{2}} 
\Big{/} 
\big{(} k_{{5}}^{2}+k_{{4}}^{2}+k_{{3}}^{2}+k_{{2}}^{2}+k_{{1}}^{2} \big{)}^{\frac{1}{2}}.\label{eq:E5dispersionRelation}
\end{eqnarray}
Looking lengthy though, the above relation actually presents nice and reasonable order (as can be observed with the help of the underwave and underlines to highlight and separate the different groups of terms), and interested readers can check (it is easiest to start with the simple rotation with $\lambda_2=0$), as an example, how the above mode reduces to the vortical one and recovers the TPT with additional (\ref{eq:TP5d}).

Such properties indeed present the connections with TPT, satisfying our main purpose for introducing these relatives, thus appear to be the indication of a  `resonant wave theory' as in $\mathbb{E}^3$ for describing part of the scenario (waves may not be complete for a turbuence\footnote{In fact, to our best knowledge, so far there is no argument for  precise realization of TPT state: Waleffe\cite{WaleffePoF93} argued with resonant-interaction theory the energy transfer to slower but not vortical modes, thus only quasi-two-dimensionalization, not precisely two-dimensionalization as the TPT state.}) of approaching those Taylor-Proudman states: A simple consistent intuition is that inertial waves are anisotropic, while resonant interactions due to nonlinearity tend to isotropize the system, thus reducing the former. We also believe that other higher-order geometrical laws discussed in Sec. \ref{sec:geometricalRemarks} should work in constraining the higher-order (resonant) interactions to favor the dynamical evolution towards the Taylor-Proudman-type states, because in the Lie-carrying equations for higher-order differential forms, say, $\verb"d"\verb"U"\wedge \verb"d"\verb"U"$, higer-order nonlinearities are involved, which naturally connects the higher-order resonant interactions. Although there are rigorous mathematical proofs/estimations\cite{EmbidMajda98,MajdaEmbid98} in $\mathbb{E}^3$ supporting the resonant wave theory under appropriate conditions, there are also unsettled subtleties\cite{SagautCambon08Book,CCEHjfm2005,SmithLeeJFM2005} in the turbulent regime involving the large Reynolds number, near resonant interactions and higher order effects in the long time\cite{NewellJFM1969}. In $\mathbb{E}^4$, the number and distribution of the resonant and near-resonant modes are at least quantitatively different to $\mathbb{E}^3$, thus may offer a model with changing effects of the output to check relevant ideas developed to attack the subtle issues in the latter. We speculate that with the change of dimensionality and that the geometry and topology of the slow manifold, or the change of the population and distribution of the (near) resonant modes\cite{SmithLeeJFM2005,ClarkDiLeoniMininniJFM2016}, there could be some `phase transition' point, the number of dimensionality, crossing which the asymptotic limits of fast rotation are different. Note that existing simulations for $d>3$, such as those for isotropic cases\cite{GotohPRE2007,MiyazakiPhD2010}, may be extended to check some of our results; adding the noninertial force to that of channel flow\cite{NikitinJFM2011} may also be illuminating for rotating channel turbulence.

\subsection{TPT and Reduced models}\label{sec:reducedModels}

As said, one of our motivations is also to check whether rotation(s) in high dimensions also  lead to (simplified) models of `realistic' relevance. If this is the case, then we can unify different problems dynamically, with the possibilities of new angles of view, physical insights and techniques. The direct question is whether we can obtain the passive-(multiple-)scalar model from fast rotations, just as the case of two-dimensional passive scalar resulting from imposing TPT on the 3D Navier-Stokes flow.

In the derivation of the TPTs, the time variation of the (relative) flow is much smaller than the rotating rate, or simply let it be zero (steady state), as the fast rotating limit. Intuitively, with increasing rotation rate, the period of time for developing fluid structures varying as fast as the rotation should also increase, as the excitation of ever smaller scales is indicated. Thus, at least for some finite time interval, the TPT features should dominate the dynamics.
In the formal derivation of the averaged equation using the standard asymptotic expansion involving the multiple-scale method, the reduced dynamics are seen to be the average over the fast waves or over the vertical co-ordinate (according to the dispersion relation in $\mathbb{E}^3$ mentioned in Remark \ref{rm:E4dispersion}.) The result for the incompressible flow is well documented\cite{BabinMahalovNikolaenkoEJMB1996,EmbidMajdaCPDE1996} with the averaged (sub)-system being `autonomous' and working as the `catalyst' for the fast/wave modes, which is consistent with taking the (stationary) TPT $\partial_3 \bm{u} = 0$ back to the dynamics. Such a time-dependent form of TPT character is, as said, conceptually simple, but technically rather subtle, especially when the issue of turbulence is involved and if high level of mathematical rigor is to be pursued\cite{CambonRubinsteinGodeferdNJP04,PouquetMininniPTRSA10,GibbonHolmNonlinearity2017}. The well-known \textit{resonant wave theory} describes part of the scenario, but not the formation of pure two-dimensional structure\cite{Greenspan1968Book,WaleffePoF93}. 

\subsubsection{$\mathbb{E}^3$ and beyond: reduction of compressible flows}\label{sec:reducedCompressible}
If the flow is compressible, although TPT still requires $\partial_z \bm{u}_h = 0$ and $\nabla_h \cdot \bm{u}_h = 0$, the compressive-mode existence implies that simplely averaging $u_3$ over the inertial wave or the vertical axis do not make much sense. 
Indeed, consider the TPT in $\mathbb{E}^3$ 
for the compressible but barotropic fluid, for simplicity [with 
$\Pi_{\bullet}:=\int d P_{\bullet}/\rho$ below], rotating fast (compared to the vortical part of the relative motion) in $\mathbb{E}^3$. Formally using Eq. (\ref{eq:cTPE}) and keeping the time derivative, with the equivalence of the indexes `$_v$', `$_3$' and `$_z$', the 3D Navier-Stokes equation reduces to
\begin{eqnarray}
\partial_t \rho + \nabla\cdot (\rho\bm{u})=0,\label{eq:continuity}\\
\!\!\!\! (\partial_t + \bm{u}_h\cdot \nabla_h - \nu_h \Delta_h) \bm{u}_h  = 
-\nabla_h \Pi_h
,\label{eq:uh3dsr}\\
(\partial_t + u_z\partial_z + \bm{u}_h\cdot\nabla_h - \nu \Delta_h - \nu \partial_{zz}) u_z =
-\partial_z \Pi_z
.\label{eq:uv3dsr}
\end{eqnarray}
\begin{remark}\label{rm:E3passive}
Eq. (\ref{eq:uh3dsr}) can also be understood to be from the vertical average of the original Navier-Stokes equation, thus the corresponding meanings of $\bm{u}_h$ and $\Pi_h$. According to the TPT, we may also impose $\nabla_h \cdot \bm{u}_h = 0$ on this equation. $\Pi_z$ in principle is just $\Pi$ whose vertical average is $\Pi_h$. The $\bm{u}_h$ dynamics appears to be coupled with others through $\Pi_h$ and the continuity equation (\ref{eq:continuity}), but it could be autonomous with $\Pi_h$, if not affected externally, classically determined by itself through a Poisson equation, as is the common wisdom for a pure incompressible flow problem (whose well-posedness in the sense of global regularity however has not been proved). It is possible that $\Pi_h$ and $\bm{u}_h$ simply affect $u_z$ and $\rho$ in a one-way manner, but it is not very clear at this point when and why $u_z$ and $\rho$ would not affect $\Pi_h$ and $\bm{u}_h$. If $u_z$ in Eq. (\ref{eq:uv3dsr}) indeed has no back-reaction onto $\bm{u}_h$, we may call $u_z$ a \textit{nonlinear passive scalar}.  One might expect, with an extra source for $u_z$, this source could in general affect the $\bm{u}_h$ dynamics through $P_z$, which deserves further examination: For example, one may check, with $u_z$ numerically forced in rotating flows, whether the dependence of $\bm{u}_h$ on this forcing is decreased with increasing rotating rates, by comparing with an independent simulation of Eq. (\ref{eq:uh3dsr}).

Note that in the incompressible case, thus vanishing $\partial_z u_z$ and $u_z\partial_z u_z$ in the above, Eq. (\ref{eq:uv3dsr}) can also be viewed as the vertical average of that of the original Navier-Stokes equations. 
Our extra pressure term now should vanish, if the average is over a torus or over an infinite length. Otherwise, with $g(x,y):=\partial_z \Pi_z$, we have as a representation of $\Pi$, $\Pi_z = zg(x,y,t) + f(x, y, t)$. We then see that $\nabla_h g = \bm{0}$ (otherwise we would have $z$-depedent $\Pi_h$), thus $\nabla_h \Pi_z = \bm{0}$ (no contribution/feedback onto the horizontal flow through $\nabla_h \Pi_h$) and $-\partial_z \Pi_z = g(t)$ uniform in space. For such a linear passive scalar problem, it means only a trivial linear surperposition of a function to that without such a term in the classical resonant-wave-theory result, introducing no essential difference. \textit{The incompressible flows in $\mathbb{E}^{2m+1}$ with $m$ \textit{simultaneously fast} rotations obviously all formally share such $(2m)$D-$(2m+1)$C characteristics, but interesting novel feature may emerge (c.f., the two extended remarks for $\mathbb{E}^5$ in Sec. \ref{sec:E4reductions} below).} 
\end{remark}

\subsubsection{$\mathbb{E}^4$ and beyond: reduction of incompressible flows}\label{sec:E4reductions} 
The ambiguity of passiveness in Remark \ref{rm:E3passive} appears to be due to the compressibility. We then turn to the TPT of incompressibe flow 
in $\mathbb{E}^4$. 

\paragraph{$\mathbb{E}^4$ simple rotation}\label{sec:E4simple} 
Denoting the incompressible horizontal velocity in the $x_1$-$x_2$ $h$-plane by $\bm{u}_h = [u_1(x_1,x_2,t), u_2(x_1,x_2,t)]$, the incompressible vertical velocity `along' the $v$-plane $x_3$-$x_4$ by $\bm{u}_v = [u_3(x_1,x_2,x_3,x_4,t),u_4(x_1,x_2,x_3,x_4,t)]$ with homogeneous unit density $\rho=1$
, we use the TPT Eq. (\ref{eq:TPsr}) for \textit{simple} fast rotation to formally decompose the incompressible 4D Navier-Stokes into
\begin{eqnarray}
\!\!\!\!\!\! \partial_t \bm{u}_h + (\bm{u}_h\cdot \nabla_h - \nu_h \Delta_h) \bm{u}_h  =  - \nabla_h P_h\ \text{with} \ \nabla_h \cdot \bm{u}_h =0, \ \nabla_v \bm{u}_h=\bm{0}, \ \nabla_v {P}_h=\bm{0},\label{eq:uh4dsr}\\
\!\!\!\!\!\! \text{and}\ \partial_t \bm{u}_v + (\bm{u}_v\cdot \nabla_v + \bm{u}_h\cdot\nabla_h - \nu_h \Delta_h - \nu_v \Delta_v) \bm{u}_v =  -\nabla_v P_v \ \text{with} \ \nabla_v \cdot \bm{u}_v=0, \label{eq:uv4dsr}
\end{eqnarray}
where $\nu_h$ and $\nu_v$ may be different to take into account the possibility of anisotropic damping for whatever reason (including the re-scaling with a non-unit $r$). 
\begin{remark}\label{rm:E4passive}
Now Eq. (\ref{eq:uh4dsr}) can again be viewed as the average over the fast waves or the vertical axises ($x_3$ and $x_4$), thus the corresponding meanings of $\bm{u}_h$ and $P_h$, and the dynamics could be `self-autonomous' as in the classical incompressible flow problem. However, again, $P_v$ represents $P$ whose vertical average (over both $x_3$ and $x_4$) is $P_h$, and it is neither very clear at this point when and why $P_v=P$ would not affect $P_h$. We expect again, at least, if $\bm{u}_v$ is stirred by an extra force, this force should in general affect the $\bm{u}_h$ dynamics through $P_v$. Obviously, \textit{this is the general feature for any $d>3$ with less than $m$ (\textit{simultaneously}) \textit{fast} rotations leaving more than one rotation axises}.
\end{remark}

\paragraph{$\mathbb{E}^4$ double rotations}\label{sec:E4double} 
We now consider incompressible flow in $\mathbb{E}^4$ with \textit{double rotations}, respectively \textit{simultaneously fast} and \textit{independently fast}:--- 

\subparagraph{$\mathbb{E}^4$ \textit{simultaneously fast} double rotations}\label{sec:E4simultaneousDouble} 
Eq. (\ref{eq:TP4dr1}), for $r=1$, say, does not appears to indicate any simple reduced model. An extended remark for $\mathbb{E}^5$ with such rotations, concerning the $u_{5,5} = 0$ complementing Eq. (\ref{eq:TP5d}), is that, $u_5$ now is a passive scalar advected by the four-dimensional `horizontal' flow in the two rotating planes.

\subparagraph{$\mathbb{E}^4$ \textit{independently fast} double rotations}\label{sec:E4independentDouble} 
Eqs. (\ref{eq:TP4dr1}, \ref{eq:E4fiiner}) indicate two sets of Eq. (\ref{eq:uh4dsr}) for $\bm{u}_h$ denoting  $(u_1,u_2)$ and  $(u_3,u_4)$ respectively, which can be understood to be for the dynamics averaged over $x_3, x_4$ and $x_1, x_2$ respectively. In other words, we have Eq. (\ref{eq:uh4dsr}) with the replacement of the index notation $h \leftrightarrow m$ and $v \leftrightarrow 3-m$ for $m=1$ and $2$, and, separation of the pressure $P=P_1(x_1,x_2,t) + P_2(x_3,x_4,t)$, resulting in two formally identical independent self-autonomous incompressible flows, neither of which is passively advected by the other. Note that, now, we can not introduce extra forcing to couple the two flows, otherwise the force introduced would first break the flow's independence on the vertical axises. An extended remark for $\mathbb{E}^5$ with such rotations, concerning the $u_{5,5} = 0$ complementing Eq. (\ref{eq:TP5d}), is that, $u_5$ now is a passive scalar advected by the two independent flows respectively in the two rotating planes.

The definite discussions on $d=4$ may also be appropriately extended to even higher dimensions, but, together with the above discussions, the indications of both simplicity and complexity for connecting TPT and the reduced models will be so clear that we don't need to go any further for the purpose of this note. 

\subsection{TPT and higher-order Kelvin theorems: consistency and higher-order correction}

We have seen that the rotation dominant contribution from higher-order (generalized) Kelvin theorem can be deduced from the first-order theorem, but it makes sense to check what exactly higher-order Kelvin theorems say for the rotation dominant state and what would be the sub-dominant (in terms of $\verb"U"_R$) correction from the higher-order Kelvin theorem. We can of course also write down the general results corresponding to any $k$th-order Kelvin theorem, which, however, is not as illuminating as the illustrative approach for presentation with examples below.

It is enough to check the third-order object $\verb"H"^1_T =(\verb"U"+\verb"U"_R)\wedge \verb"d"(\verb"U"+\verb"U"_R)$, whose integral invariant as the `circulation' over a `circuit' (any 3-boundary) in the rotation dominance approximation, $\verb"d"L_u \verb"U"_R \wedge \verb"d"\verb"U"_R = 0$ or $\verb"d"\iota_u \Omega_R \wedge \Omega_R = 0$ in Eq. (\ref{eq:kthLie}), leads to $\sum_{i = 1}^4  u_{i,i} = 0$ in $\mathbb{E}^4$ with double rotation and to null constraint from fast simple rotation, and, additionally $\sum_{i = 1}^4  u_{1,i} = 0$ in $\mathbb{E}^5$ with double rotation and null constraint again for simple rotation, nothing new compared to what we have already derived in the last (sub)sections (as may be expected from the fact that the higher-order geometrical objects are simply derived from the lower-order ones). And, the next-order (in $\verb"U"_R$) contribution
\begin{equation}
\verb"d"\verb"U"_R \wedge \verb"d"\verb"U" - \verb"d"\verb"U" \wedge \verb"d"\verb"U"_R = 2\verb"d"\verb"U"_R \wedge \verb"d"\verb"U",\ \text{in}\ \verb"d"\verb"H"_T^1,\ \text{to}\ \verb"d" L_u \verb"H"_T^1 =0\ \text{is null},
\end{equation}
in the sense that the correction with $L_u \verb"d"\verb"U" \wedge \verb"d"\verb"U"_R = 0$ would be in terms of quadratic forms (second-order combinations) of the derivatives of $\bm{u}$, from $L_u$ and $\verb"d"\verb"U"$. That is, sub-dominant corrections of $\verb"U"_R$ turn into the higher-order corrections of $\bm{u}$, which do not modify the Taylor-Proudman theorems that are of first-order-in-$u$ nature. 

The above concret demonstration is generic and can be summarized as the following
\begin{corollary}\label{crl:highOrderKelvin}
The dominant results from higher-order Kelvin theorems are consistent with the TPT from conventional (first-order) Kelvin theorem, and sub-dominant (in $\Omega_R$) contributions reveal higher-order (in $u$) corrections, irrelevant to the leading-order TPT.
\end{corollary}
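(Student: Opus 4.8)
The plan is to expand the higher-order Kelvin invariant carried over to the rotating frame, $\verb"H"^k_T=(\verb"U"+\verb"U"_R)\wedge[\verb"d"(\verb"U"+\verb"U"_R)]^k$, in powers of the background $\verb"U"_R$ and to read off the constraints implied by its Lie invariance order by order in $\Omega_R$. Since each $\verb"d"(\verb"U"+\verb"U"_R)$ is exact, Lemma \ref{lm:rotatingRate} gives $\verb"d"\verb"H"^k_T=[\verb"d"(\verb"U"+\verb"U"_R)]^{k+1}=(\verb"d"\verb"U"+2\Omega_R)^{k+1}$, and, all factors being $2$-forms that commute under $\wedge$, the binomial expansion reads
\begin{equation*}
\verb"d"\verb"H"^k_T=\sum_{j=0}^{k+1}\binom{k+1}{j}(2\Omega_R)^{k+1-j}\wedge(\verb"d"\verb"U")^{j},
\end{equation*}
with leading ($j=0$) term $(2\Omega_R)^{k+1}$ and first sub-dominant ($j=1$) term $(k+1)(2\Omega_R)^{k}\wedge\verb"d"\verb"U"$. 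Carrying Eq. (\ref{eq:inertial2kp2form}) over with $\verb"d"\verb"U"\to\verb"d"(\verb"U"+\verb"U"_R)$ supplies the invariance $(\partial_t+L_u)\verb"d"\verb"H"^k_T=0$, so in the steady fast-rotation limit each order of this expansion is separately Lie-annihilated.

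First I would establish consistency. At leading order the invariance demands $L_u(2\Omega_R)^{k+1}=0$; by the Leibniz rule for $L_u$ (as used in Remark \ref{rm:differentialAreaRate}) this equals $(k+1)2^{k+1}(L_u\Omega_R)\wedge(\Omega_R)^{k}$, which vanishes identically the moment Theorem \ref{th:differentialAreaRate} provides $L_u\Omega_R\to 0$. Hence the dominant part of every higher-order theorem is an automatic consequence of the first-order TPT and yields no constraint beyond $L_u\Omega_R=0$, reproducing what the concrete $\verb"H"^1_T$ computation gave in $\mathbb{E}^4$ and $\mathbb{E}^5$ (namely $\sum_i u_{i,i}=0$, already implied by the first-order relations).

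Next I would extract the correction and identify its order. At sub-dominant order the invariance reads $L_u[(2\Omega_R)^{k}\wedge\verb"d"\verb"U"]=0$, and Leibniz together with $L_u\Omega_R=0$ collapses this to $(2\Omega_R)^{k}\wedge L_u\verb"d"\verb"U"=0$. Cartan's formula and the nilpotency $\verb"d"\verb"d"\verb"U"=0$ give $L_u\verb"d"\verb"U"=\verb"d"\iota_u\verb"d"\verb"U"$, which is assembled from the velocity $u$ acting on the vorticity $2$-form $\verb"d"\verb"U"$ and is therefore \emph{quadratic} in $u$ and its derivatives. Since the leading TPT (\ref{eq:differentialAreaRate}) is a set of relations \emph{linear} in the first derivatives of $u$, this constraint sits at a strictly higher order in $u$, cannot close back onto the linear TPT relations, and is in this precise sense irrelevant to the leading-order theorem; this is exactly the null correction already exhibited for $\verb"d"\verb"H"^1_T$ just above the statement.

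The main obstacle is the asymptotic bookkeeping needed to equate `sub-dominant in $\Omega_R$' with `higher order in $u$' unambiguously. One must compare the magnitude of each power $(2\Omega_R)^{k+1-j}$ against its companion $(\verb"d"\verb"U")^{j}$: this is clean for \emph{simultaneously fast} rotations, where all large $\lambda_i$ share one order, but in the \emph{independently fast} regime of Definition \ref{def:fast} the Leibniz collapse must be justified separately at each $\lambda$-order, with the subsidiary dominant balances performed in turn. A secondary technical point is degree and decomposability: $(\Omega_R)^{k+1}$ vanishes once $k+1$ exceeds the number $m$ of independent rotation planes (the non-decomposability issue flagged via Fecko's Appendix~C), so one must stay within the admissible range $0\le k\le m-1$ of Eq. (\ref{eq:inertial2kp2form}) and verify that the surviving wedge powers are non-zero before the argument applies.
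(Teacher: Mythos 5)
Your proof is correct, and its mechanism is exactly the paper's: expand the rotating-frame object in powers of $\Omega_R$, use the Leibniz rule for $L_u$, kill the dominant term with $L_u\Omega_R\to 0$ from Theorem \ref{th:differentialAreaRate} (this is precisely the content of Eq. (\ref{eq:kthLie}) and Remark \ref{rm:differentialAreaRate}), and observe that the first sub-dominant term collapses to a multiple of $(\Omega_R)^k\wedge L_u\verb"d"\verb"U"$, whose coefficients are quadratic in $u$ and its derivatives, hence of strictly higher order than the linear-in-$u$ TPT relations. The only genuine difference is the level at which the argument is run: the paper explicitly declines to write the general-$k$ computation (judging it ``not as illuminating as the illustrative approach''), instead verifies the $k=1$ object $\verb"H"^1_T$ concretely in $\mathbb{E}^4$ and $\mathbb{E}^5$ --- finding $\sum_{i=1}^4 u_{i,i}=0$ and the like, already implied by Eqs. (\ref{eq:TP4dr1},\ref{eq:TP5d}) --- and then asserts that the demonstration is generic; you supply exactly the generic argument it waves at, via the binomial expansion $\verb"d"\verb"H"^k_T=(\verb"d"\verb"U"+2\Omega_R)^{k+1}$, legitimate because the factors are commuting 2-forms. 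Your version buys a self-contained corollary (no extrapolation from a single example) and makes explicit two caveats the paper leaves implicit: the order-by-order dominant-balance bookkeeping needed in the independently-fast regime of Definition \ref{def:fast}, and the degeneracy $(\Omega_R)^{k+1}=0$ once $k+1$ exceeds the number of active rotation planes, which is exactly why simple rotation yields only null higher-order constraints. The paper's illustrative version buys, in exchange, explicit component relations that can be checked term by term against the TPT formulas of Sec. \ref{sc:examples}.
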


\section{Further discussions}\label{sec:FurtherDiscussions}
The formal result of TPT for (compressible) flows in $\mathbb{E}^3$ (Sec. \ref{sec:E3}) and the time-dependent consideration of reduced models (Sec. \ref{sec:reducedCompressible}) appear to be of most direct realistic relevance. The relevant issues of the traditional incompressible case have been discussed over around one and a half centuries with the origin traced back to the 19th century\cite{VelascoFuentesEJMB2009}, but the compressible ones have attracted much less attention. We have proven that, unlike the formal reduction of a two-dimensional passive scalar from fast-rotating 3D incompressible flow, it is not possible to obtain 3D passive scalar(s) by mere rotation(s) in high spatial dimensions, which does not satisfy one of our motivations. The byproduct that other nonlinear passive scalar dynamics may result is however still interesting. The extension of the analyses to plasma fluids is direct, with nontrivial technicalities though, and will be communicated elsewhere together with other discussions. 

We particularly remark that due to the reduction of horizontal/perpendicular compressibility in time dependent flows, we may anticipate that aeroacoustic noise, of particular mechanical engineering interests, and particle acceleration, of astronomical relevance, may accordingly be reduced anisotropically; but, even for a strong gas turbulence (isotropic or not), full of rotatory structures, with or without background rotation, it is also possible to use the result to develope a theory and technique for reducing the compressibility (thus the relevant noise and particle heating, among others) of flows, i.e., fastening a turbulent gas with the circuits and/or the accordingly bounded surfaces used first by Taylor to prove TPT.

On the mathematical side, high-dimensional flows have been extensively analyzed\cite{DongDuCMP2006,DongGuDPDE2014,GuJDE2014,LiuWangJDE2017}, including four and six dimensional (magneto)hydrodynamics, and it is expected that similar analyses with our Coriolis forces shall bring new lights on the regularity issue, as those in 3D\cite{BabinMahalovNicolaenkoAsymptotAnal1997,KoniecznyYonedaJDE2011,SunYangCuiAMPA2016,
ZhaoWangMMAS,KishimotoYonedaMathAnn2017,KozonoMashikoTakadaJEE2014}, with mutual enlightenment. Resonant-wave relevant theory concerning the reduced models discussed in Sec. \ref{sec:reducedModels} also calls for rigorous analysis for more concret knowledge.

As remarked in the introductory discussions (Footnote \ref{lb:dDflow}), there is a long list of literatures on turbulence in high spatial dimensions. Though the theoretical physics concern appears to be on the `gross' dimensional effects, in particular the critical dimension concerning intermittency/mean-field theory emengence or cascade-direction transition. We believe that turbulence of hight spatial dimensions, as a sea of various simple- and multiple-rotation structures of various parameters, the detailed properties of intensive structures with fast rotation(s) studied here may play important roles in understanding some fundamental issues.

\section*{Acknowledgments}
This work is supported by NSFC (No. 11672102) and the Ti\'an-Yu\'an-Xu\'e-P\`ai research foundation.

\providecommand{\url}[1]{\texttt{#1}}
\providecommand{\urlprefix}{}
\providecommand{\foreignlanguage}[2]{#2}
\providecommand{\Capitalize}[1]{\uppercase{#1}}
\providecommand{\capitalize}[1]{\expandafter\Capitalize#1}
\providecommand{\bibliographycite}[1]{\cite{#1}}
\providecommand{\bbland}{and}
\providecommand{\bblchap}{chap.}
\providecommand{\bblchapter}{chapter}
\providecommand{\bbletal}{et~al.}
\providecommand{\bbleditors}{editors}
\providecommand{\bbleds}{eds: }
\providecommand{\bbleditor}{editor}
\providecommand{\bbled}{ed.}
\providecommand{\bbledition}{edition}
\providecommand{\bbledn}{ed.}
\providecommand{\bbleidp}{page}
\providecommand{\bbleidpp}{pages}
\providecommand{\bblerratum}{erratum}
\providecommand{\bblin}{in}
\providecommand{\bblmthesis}{Master's thesis}
\providecommand{\bblno}{no.}
\providecommand{\bblnumber}{number}
\providecommand{\bblof}{of}
\providecommand{\bblpage}{page}
\providecommand{\bblpages}{pages}
\providecommand{\bblp}{p}
\providecommand{\bblphdthesis}{Ph.D. thesis}
\providecommand{\bblpp}{pp}
\providecommand{\bbltechrep}{}
\providecommand{\bbltechreport}{Technical Report}
\providecommand{\bblvolume}{volume}
\providecommand{\bblvol}{Vol.}
\providecommand{\bbljan}{January}
\providecommand{\bblfeb}{February}
\providecommand{\bblmar}{March}
\providecommand{\bblapr}{April}
\providecommand{\bblmay}{May}
\providecommand{\bbljun}{June}
\providecommand{\bbljul}{July}
\providecommand{\bblaug}{August}
\providecommand{\bblsep}{September}
\providecommand{\bbloct}{October}
\providecommand{\bblnov}{November}
\providecommand{\bbldec}{December}
\providecommand{\bblfirst}{First}
\providecommand{\bblfirsto}{1st}
\providecommand{\bblsecond}{Second}
\providecommand{\bblsecondo}{2nd}
\providecommand{\bblthird}{Third}
\providecommand{\bblthirdo}{3rd}
\providecommand{\bblfourth}{Fourth}
\providecommand{\bblfourtho}{4th}
\providecommand{\bblfifth}{Fifth}
\providecommand{\bblfiftho}{5th}
\providecommand{\bblst}{st}
\providecommand{\bblnd}{nd}
\providecommand{\bblrd}{rd}
\providecommand{\bblth}{th}

\end{document}